\newtheorem{theorem}{Theorem}
\newtheorem{lemma}[theorem]{Lemma}
\newtheorem{definition}{Definition}
\newtheorem{proposition}{Proposition}
\begin{document}
\title{Extending Azumaya algebras associated to arithmetic 2-bridge links}
\author{Jiayu Wan}\address{Rice University}

\begin{abstract}
Let $\Gamma$ be a finitely generated group and consider the set of all characters of representations of $\Gamma$ into $SL_2(\mathbb{C})$. This set, denoted by $X(\Gamma)$,  admits an algebraic structure and is called the character variety of $\Gamma$. When $\Gamma$ is the fundamental group of a hyperbolic 3-manifold $M$, $X(\Gamma)$ turns out to be a powerful tool in the study of the geometry and topology of $M$. Chinburg-Reid-Stover \cite{Chin_Reid_Stover} have borrowed tools from algebraic and arithmetic geometry to understand algebraic and number-theoretic properties of the canonical curves of $X(\Gamma)$. 
In this paper, we will partly generalize their results to certain hyperbolic link complements, and prove that the associated canonical quaternion algebra will not extend to an Azumaya algebra over the canonical surfaces.   
\end{abstract}

\subjclass{35Q20, 68T07, 82C40, 65F99}
\keywords{hyperbolic 3-manifolds \and character varieties \and Azumaya algebra \and algebraic geometry}
\date{}

\maketitle

\section{Introduction}

Let $\Gamma$ be a finitely generated group and let $X(\Gamma)$ be the $SL_2(\mathbb{C})$ character variety of $\Gamma$ (see Section 2.1). When $\Gamma$ is the fundamental group of a hyperbolic 3-manifold $M$ (such as the exterior of a hyperbolic knot complement in $S^3$), seminal work of Thurston(\cite{Thurston}) and Culler-Shalen(\cite{Culler_Shalen_1}) established $X(\Gamma)$ as a powerful tool in the study of the geometry and topology of $M$. For instance, in \cite{Thurston}, Thurston established the connection between the number of cusps of $M$ and the complex dimension of its canonical components. In \cite{Culler_Shalen_1}, Culler-Shalen showed how the character variety can be used to detect embedded essential surfaces in hyperbolic knot complements. 

More recently, Chinburg-Reid-Stover(\cite{Chin_Reid_Stover}) have borrowed tools from algebraic and arithmetic geometry to understand algebraic and number-theoretic properties of certain components of $X(\Gamma)$ as well as distinguished points on these components. In particular, they build a canonically defined quaternion algebra $A_{k(C)}$ over the function field of a canonical component of $X(\Gamma)$ (See Section 2.1). Based on this canonical quaternion algebra, their study is two-fold. On the one hand, they specialize the quaternion algebra at Dehn surgery points of the component and prove results about invariants of Dehn surgeries on hyperbolic knots. On the other hand, they study the Alexander polynomial of the hyperbolic knot and discover a connection between the roots of this polynomial and the extension of the quaternion algebra over the whole canonical component. Based on their work, several other results have been developed. For instance, Rouse(\cite{Rouse}) has used the canonical quaternion algebra to study Dehn surgeries of a specific hyperbolic knot. Miller(\cite{Miller}) derived similar results regarding the extension of the canonical quaternion algebra for once punctured torus bundles. 

In this paper, we are going to generalize the results of \cite{Chin_Reid_Stover} in another direction: moving from hyperbolic knot complements to hyperbolic link complements. The results of \cite{Chin_Reid_Stover}, \cite{Rouse}, \cite{Miller} deal with 1-cusped hyperbolic 3-manifolds and hence the canonical components of $X(\Gamma)$ are complex algebraic curves by Thurston's theorem. In this paper, we will consider 2-cusped hyperbolic 3-manifolds, in which case the canonical components are complex surfaces. In particular, we are going to study the three 2-bridge links, $5_{1}^{2}$, $6_{2}^{2}$, and $6_{3}^{2}$ in Rolfsen's table(see \cite{Rolfsen}). Note these are the only three arithmetic 2-bridge links(see \cite{Gehring}). Our main result is the following. 

\begin{theorem}\label{main result}
    Let $L$ be one of the three links, $5_{1}^{2}$, $6_{2}^{2}$, or $6_{3}^{2}$, and let $M=S^3 -L$ be the complement. Let $S$ be a canonical component of $X(\pi_{1}(M))$. Then $A_{k(S)}$ does not extend to an Azumaya algebra over the surface $S$.
\end{theorem}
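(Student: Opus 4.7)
The plan is to adapt the strategy of Chinburg-Reid-Stover to the two-cusped setting where the canonical component is a complex surface rather than a curve. The overall goal is to exhibit, for each of the three links, a specific subvariety of $S$ along which the canonical quaternion algebra ramifies, which then obstructs extension to an Azumaya algebra over $S$.

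First, for each link $L$ I would use the 2-bridge structure to fix a presentation $\pi_1(M) = \langle a, b \mid w \rangle$, where $a, b$ are meridians of the two components and $w$ is the standard 2-bridge relator. Since $\pi_1(M)$ has only two generators, the character variety is naturally parametrized by the trace coordinates $x = \mathrm{tr}(\rho(a))$, $y = \mathrm{tr}(\rho(b))$, $z = \mathrm{tr}(\rho(ab))$, and the defining equations of $X(\pi_1(M))$ can be read off from $w$ via the Fricke-Riley approach. The canonical component $S$, which is a complex surface by Thurston's theorem, is cut out by these equations, and explicit models are available in the literature on 2-bridge link character varieties.

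Next, I would give a concrete presentation of the canonical quaternion algebra $A_{k(S)}$. Using the tautological representation $\rho$ over $k(S)$, the subalgebra of $M_2(k(S))$ spanned by $1, \rho(a), \rho(b), \rho(ab)$ is a quaternion algebra, with Hilbert symbol entries that are rational functions in the trace coordinates $x, y, z$ pulled back to $S$. The core of the proof is then to identify a divisor $D \subset S$ at whose generic point $A_{k(S)}$ ramifies, so that no Azumaya order over $S \setminus D$ extends across $D$. The natural candidates for $D$ are the intersection of $S$ with the closure of the locus of reducible representations, and divisors on which certain traces become $\pm 2$ so that the corresponding group element specializes to a parabolic or trivial element. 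Alternatively, I could restrict $A_{k(S)}$ to a carefully chosen curve $C \subset S$ and apply a computation modeled on the one-cusped arguments of \cite{Chin_Reid_Stover} and \cite{Miller}, thereby reducing the obstruction to a one-dimensional problem on $C$.

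The main obstacle is the explicit identification of the ramification locus: unlike the curve case, divisors on $S$ are themselves curves, so the analysis is genuinely two-dimensional and must be carried out link by link. The arithmetic hypothesis is essential here. For the three links in question, the invariant trace field is an imaginary quadratic field $\mathbb{Q}(\sqrt{-d})$ with $d \in \{1, 3\}$ and the invariant quaternion algebra is the matrix algebra over this field, so only a very restricted class of Hilbert symbols can appear in $A_{k(S)}$. I expect this to constrain the possible ramification behavior enough that, after computing the Hilbert symbol in terms of the explicit equations of $S$, one can read off a specific bad divisor by inspection and verify obstruction of the Azumaya extension at its generic point.
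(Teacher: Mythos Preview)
Your setup is right and matches the paper: trace coordinates $x,y,z$, the Hilbert symbol $\bigl(\frac{x^2-4,\;x^2+y^2+z^2-xyz-4}{k(S)}\bigr)$, and the reducible locus $\{\beta=0\}\cap S$ as the candidate divisor. The paper does exactly this: it takes an irreducible curve $C$ inside $S\cap\{\beta=0\}$, checks transversality to get $\mathrm{ord}_C(\beta)=1$ and $\mathrm{ord}_C(\alpha)=0$, so the tame symbol is the class of $\alpha=x^2-4$ in $k(C)^\times/(k(C)^\times)^2$.

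The gap is in your last paragraph. The arithmetic hypothesis plays no role whatsoever in the paper's argument; these three links are singled out only because they are the arithmetic 2-bridge links, not because arithmeticity is used in the proof. There is no step where the invariant trace field or invariant quaternion algebra constrains the Hilbert symbol, and nothing can be ``read off by inspection.'' The actual obstacle is proving that $x^2-4$ is \emph{not} a square in $k(C)$, and this is where all the work lies. For $5_1^2$ the curve $C$ degenerates into lines and one checks non-squareness in a polynomial ring. For $6_3^2$ the curve $C$ is elliptic, and one must compute $\mathrm{div}(\alpha)$, halve it formally, and use the group law on $E$ to show the half is not principal. For $6_2^2$ the curve $C$ is hyperelliptic of genus~$3$; the paper puts it in Weierstrass form, writes down the Mumford representation of the relevant degree-zero divisor, and runs Cantor's algorithm in the Jacobian to show it is nontrivial.

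So your plan is structurally correct up to the point of identifying the bad divisor, but you have misidentified both the source of difficulty and the tool that resolves it. Drop the arithmetic heuristic and replace it with: (i) a transversality check to compute the tame symbol as $[\alpha]\in k(C)^\times/(k(C)^\times)^2$, (ii) a genus computation for $C$ in each case, and (iii) a principality test for $\tfrac{1}{2}\mathrm{div}(\alpha)$ in $\mathrm{Pic}^0(C)$, carried out by elementary UFD arguments, the elliptic group law, or Cantor's algorithm according to the genus.
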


Our result assumes that the canonical surfaces are defined over $\mathbb{C}$, and hence are complex surfaces. In contrast, in \cite{Chin_Reid_Stover}, the canonical curves are assumed to be defined over some number field. The reason for this distinction is that for curves, all the points with codimension 1 are closed points and hence the residue fields at those points are finite extensions of the base field $K$. So if the curve is defined over $\mathbb{C}$, i.e. $K=\mathbb{C}$, then all the residue fields will be $\mathbb{C}$ since $\mathbb{C}$ is algebraically closed. Then, except for the generic point of the curve,  the specialization of $A_{k(C)}$ at any other point will be trivial since $Br(\mathbb{C})=0$. Then there is nothing interesting in this case, hence the need for the curve to be defined over some number field. However, for a complex surface, the points with codimension 1 will correspond to curves on that surface and the residue fields at those points will not be trivial. Therefore, we can conclude something non-trivial even when the base field is $K=\mathbb{C}$. 

This paper consists of four sections: this section serves as a short introduction. In Section 2, we introduce character variety and its relation with cusped hyperbolic 3-manifolds. In Section 3, we summarize the definitions and key facts about Azumaya algebras and Brauer groups. In Section 4, we prove Theorem \ref{main result}. 
\section{Character varieties and cusped hyperbolic 3-manifolds}

\subsection{Character Varieties}
Character varieties have proven to be a powerful tool in the study of hyperbolic 3-manifolds, bridging the algebraic properties of the fundamental groups and the topology of the 3-manifolds. For important works in this direction, we refer the reader to \cite{PT2015, MPV2011, BK2013, LR2003}. In this section, we present a short introduction to character varieties. Let $\Gamma$ be a finitely generated group, and let $R(\Gamma)$ be the set of all representations of $\Gamma$ in $SL_2(\mathbb{C})$, i.e. $R(\Gamma)=Hom(\Gamma, SL_2(\mathbb{C}))$. As discussed in \cite{Culler_Shalen_1}, $R(\Gamma)$ is naturally endowed with the structure of an affine algebraic closed subset of $\mathbb{C}^{4n}$, and we call $R(\Gamma)$ (with this algebraic structure) the representation variety of $\Gamma$. Two elements $\rho_1, \rho_2 \in R(\Gamma)$ are called equivalent if there exists $g \in GL_2(\mathbb{C})$ such that $\rho_2 = g\rho_{1}g^{-1}$. A representation $\rho \in R(\Gamma)$ is called irreducible if there is no nontrivial subspace of $\mathbb{C}^2$ invariant under the action of $\rho(\Gamma)$, and is called reducible if it is not irreducible.


For any $\rho \in R(\Gamma)$, the character of $\rho$ is the function $\chi_{\rho}: \Gamma \rightarrow \mathbb{C}$ defined by $\chi_{\rho}(g) = tr(\rho(g))$. It is well-known 
that if $\rho_1,\rho_2 \in R(\Gamma)$ such that $\rho_1$ is irreducible and $\chi_{\rho_1} = \chi_{\rho_2}$, then $\rho_1,\rho_2$ are equivalent. Conversely, traces are always invariant under conjugacy (hence equivalence). So characters 'almost' classify representations up to equivalence. 
Let $X(\Gamma)$ be the set of all characters of $\Gamma$ in $SL_2(\mathbb{C})$, then $X(\Gamma)$ is also endowed with an algebraic structure, from the arguments in \cite{Culler_Shalen_1}. We therefore have the following definition. 

\begin{definition}
    Let $\Gamma$ be a finitely generated group and $X(\Gamma)$ the set of characters of $\Gamma$ in $SL_2(\mathbb{C})$. Then $X(\Gamma)$, together with the endowed algebraic structure, is called the character variety of $\Gamma$ in $SL_2(\mathbb{C})$.
\end{definition}

There is a natural surjective map $t: R(\Gamma) \rightarrow X(\Gamma)$ that sends each $\rho \in R(\Gamma)$ to its character $\chi_{\rho}$. It can be shown that $t$ is a regular map, if $R(\Gamma)$ and $X(\Gamma)$ are equipped with the corresponding algebraic structure. We have the following important facts regarding this regular map $t$. 

\begin{proposition}\label{facts about t map}
    (\cite{Culler_Shalen_1}, 1.4.2. and 1.4.4.) The following are true:
    
    (1) ~The set of reducible representations in $R(\Gamma)$ has the form $t^{-1}(V)$ for some closed algebraic subset $V$ in $X(\Gamma)$. 
    
    (2) ~If $R_0$ is an irreducible closed subset of $R(\Gamma)$ which contains an irreducible representation, then $t(R_0) \subset X(\Gamma)$ is an affine variety (i.e. it's closed and irreducible).
\end{proposition}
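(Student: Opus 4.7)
For part (1), the plan is to exploit the classical trace characterization of reducibility in $SL_2(\mathbb{C})$: a pair of matrices $A, B \in SL_2(\mathbb{C})$ shares a common eigenvector if and only if $\mathrm{tr}(ABA^{-1}B^{-1}) = 2$. Consequently $\rho \in R(\Gamma)$ is reducible if and only if $\mathrm{tr}(\rho([g,h])) = 2$ for all $g, h \in \Gamma$. For each word $w \in \Gamma$ the trace function $\tau_w : \rho \mapsto \mathrm{tr}(\rho(w))$ is a regular function on $R(\Gamma)$ that is conjugation-invariant, hence descends to a regular function $I_w$ on $X(\Gamma)$ satisfying $\tau_w = I_w \circ t$. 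Defining
\[
V \;=\; \{\, x \in X(\Gamma) : I_{[g,h]}(x) = 2 \text{ for all } g,h \in \Gamma \,\},
\]
one obtains a Zariski-closed subset of $X(\Gamma)$ (the a priori infinite collection of equations reduces to finitely many by Noetherianity of the coordinate ring of $X(\Gamma)$), and by construction $t^{-1}(V)$ is precisely the reducible locus.

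For part (2), irreducibility of $t(R_0)$ is immediate because the image of an irreducible set under a regular morphism is irreducible, so the real content is closedness. The plan is to view $t$ as the GIT quotient of the affine variety $R(\Gamma)$ by the conjugation action of the reductive group $PGL_2(\mathbb{C})$; the standard theorem on reductive quotients then implies that $t$ sends closed $PGL_2$-invariant subsets to closed subsets of $X(\Gamma)$. I would apply this to the orbit closure $\widetilde{R_0} := \overline{PGL_2 \cdot R_0}$, which is closed, irreducible, and $PGL_2$-invariant, so that $t(\widetilde{R_0})$ is a closed subset of $X(\Gamma)$ containing $t(R_0)$.

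The main obstacle, and the step where the irreducibility hypothesis enters, is to establish the reverse inclusion $t(\widetilde{R_0}) \subseteq t(R_0)$, i.e.\ that passing to the orbit closure introduces no new characters. The key ingredients are: (i) by part (1), reducibility is a closed condition, so the irreducible locus $R_0^{\mathrm{irr}} \subset R_0$ is a dense Zariski-open subset; (ii) for an irreducible $\rho$, the stabilizer under conjugation is the center $\{\pm I\}$ of $SL_2$, so the orbit $PGL_2 \cdot \rho$ is three-dimensional and closed in $R(\Gamma)$, and it coincides with the entire fiber $t^{-1}(\chi_\rho)$ since characters classify irreducible representations up to equivalence. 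Using Chevalley's theorem, $t(R_0)$ is constructible and contains the image $t(R_0^{\mathrm{irr}})$ of a dense open set; combining this with the closedness of individual fibers over irreducible characters and a dimension count comparing the generic fibers of $t|_{R_0}$ and $t|_{\widetilde{R_0}}$, I would conclude that $t(R_0)$ already equals the closed set $t(\widetilde{R_0})$, completing the proof.
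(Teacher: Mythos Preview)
The paper does not supply a proof of this proposition; it is quoted directly from Culler--Shalen (\cite{Culler_Shalen_1}, 1.4.2 and 1.4.4) and used as a black box. So there is no ``paper's own proof'' to compare your attempt against. That said, here is an assessment of your sketch on its own terms.

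Your argument for (1) is correct and is essentially the argument in \cite{Culler_Shalen_1}: the trace criterion for reducibility (which the paper also records as Lemma~\ref{characterization of reducible representation}) shows the reducible locus is cut out by the conjugation-invariant equations $\tau_{[g,h]}=2$, and these descend to regular functions $I_{[g,h]}$ on $X(\Gamma)$.

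For (2), the GIT framing is sound and the reduction to showing $t(\widetilde{R_0})\subseteq t(R_0)$ is the right move, but your final paragraph does not actually close the argument. What you have established is $t(\widetilde{R_0})=\overline{t(R_0)}$ (by continuity one inclusion, by closedness of GIT images the other), so the remaining content is precisely that $t(R_0)$ is already closed --- which is the statement to be proved. Invoking ``Chevalley plus a dimension count'' does not by itself rule out that the constructible set $t(R_0)$ misses some boundary characters; you would need to show, for each $\chi\in\overline{t(R_0)}$, that the fiber $t^{-1}(\chi)$ actually meets $R_0$, and your outline does not supply that step. In Culler--Shalen's original treatment this gap is filled by a concrete device rather than abstract GIT: they produce, using a normal form for a pair of $SL_2$ matrices generating an irreducible subgroup, an explicit $4$-dimensional subvariety $V\subset R(\Gamma)$ on which $t$ restricts to a finite surjection onto a neighborhood in $X(\Gamma)$, and use finiteness (hence closedness) of $t|_V$ to conclude. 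If you want to repair your approach, the cleanest fix is to note that in every application in this paper $R_0$ is already $PGL_2$-invariant (it arises as an irreducible component of a fiber $t^{-1}(C)$, and $PGL_2$ is connected), so $\widetilde{R_0}=R_0$ and the GIT closedness theorem applies directly with no further work.
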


\noindent In this paper, we are more interested in some special irreducible components of $X(\Gamma)$ rather than all of $X(\Gamma)$. In fact, any irreducible component of $X(\Gamma)$ that contains the character of an irreducible representation is the image of some subvariety of $R(\Gamma)$ under the map $t$. This fact will be frequently used later in this article, and we shall present a short proof of it here. 

\begin{lemma}\label{irreducible component image}
    Let $C$ be an irreducible component of $X(\Gamma)$ (hence $C$ is closed and irreducible) that contains the character of an irreducible representation. Then there exists some irreducible closed subset of $D \subset R(\Gamma)$ such that $t(D) = C$. 
\end{lemma}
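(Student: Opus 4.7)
The plan is to exploit the surjectivity and continuity (regularity) of $t$, together with the two parts of Proposition \ref{facts about t map}, to extract the desired $D$ as one of the irreducible components of $t^{-1}(C)$.

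First I would set $E = t^{-1}(C)$, which is a closed subset of $R(\Gamma)$ since $C$ is closed in $X(\Gamma)$ and $t$ is regular, hence continuous. Decompose $E$ into its finitely many irreducible components, $E = D_1 \cup \cdots \cup D_n$. Because $t$ is surjective onto $X(\Gamma)$, we have $t(E)=C$, so
\[
C \;=\; t(D_1)\cup \cdots \cup t(D_n) \;\subseteq\; \overline{t(D_1)}\cup \cdots \cup \overline{t(D_n)} \;\subseteq\; C,
\]
which forces equality throughout. Each $\overline{t(D_i)}$ is irreducible (the closure of the continuous image of an irreducible set is irreducible) and closed in $C$. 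Since $C$ itself is irreducible, one of these closures, say $\overline{t(D_i)}$, must equal $C$.

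The remaining task is to promote $\overline{t(D_i)} = C$ to the statement $t(D_i) = C$, which is where Proposition \ref{facts about t map} comes in. If $D_i$ contains an irreducible representation, part (2) immediately gives that $t(D_i)$ is closed and irreducible, hence $t(D_i) = \overline{t(D_i)} = C$, and we are done by taking $D := D_i$. So the main point to verify is that the dominant component $D_i$ cannot consist entirely of reducible representations. Suppose, toward a contradiction, that $D_i$ is contained in the reducible locus. By part (1) of Proposition \ref{facts about t map}, the reducible locus is $t^{-1}(V)$ for some closed $V \subsetneq X(\Gamma)$, so $t(D_i) \subseteq V$, and hence $C = \overline{t(D_i)} \subseteq V$. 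But by hypothesis $C$ contains the character $\chi_{\rho_0}$ of some irreducible representation $\rho_0$, and by the defining property of $V$ any representation with character in $V$ is reducible; in particular $\rho_0$ would be reducible, a contradiction.

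The argument is essentially bookkeeping with irreducible decompositions, so there is no genuine obstacle; the only subtle point is recognizing that we must choose a component $D_i$ whose image is actually dense in $C$ \emph{and} which meets the irreducible locus, and that both conditions are automatic given the hypothesis on $C$. The irreducibility hypothesis on $C$ is used to pick out a single dominant component, and the assumption that $C$ contains an irreducible character is used precisely to exclude the pathological case $D_i \subseteq t^{-1}(V)$.
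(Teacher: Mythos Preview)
Your proof is correct and follows essentially the same approach as the paper's: take $t^{-1}(C)$, decompose it into irreducible components, use irreducibility of $C$ to find a component $D_i$ whose image is dense in $C$, and then use Proposition~\ref{facts about t map}(1) to rule out the case that $D_i$ lies in the reducible locus, so that Proposition~\ref{facts about t map}(2) applies and gives $t(D_i)=C$. The only inessential difference is organizational; one minor remark is that your parenthetical claim $V\subsetneq X(\Gamma)$ is not needed (and not asserted by Proposition~\ref{facts about t map}(1)), though it is true here and in any case plays no role in your contradiction.
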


\begin{proof}
Since $t$ is a regular map, $t$ is continuous. Hence $t^{-1}(C)$ is closed as $C$ is closed. Write $t^{-1}(C) = D_1 \cup...\cup D_n$, where $D_1,...,D_n$ are the irreducible components of $t^{-1}(C)$. Since $t$ is surjective, $C=t(t^{-1}(C))=t(D_1 \cup...\cup D_n)=t(D_1)\cup...\cup t(D_n) $. Since $C$ is closed, we have $C=\overline{t(D_1)}\cup...\cup \overline{t(D_n)}$. Since $C$ is irreducible, $C=\overline{t(D_i)}$ for some $i$. By Proposition \ref{facts about t map}(2), $t(D_i)$ is closed, and hence $C=t(D_i)$. If $D_i$ contains purely reducible representations, then $D_i \subset S$ where $S$ is the set of all reducible representations in $R(\Gamma)$. Then by Proposition \ref{facts about t map}(1), $S=t^{-1}(V)$ for some closed $V$ in $X(\Gamma)$, and hence $t(S)=V$ as $t$ is surjective. So $C=\overline{t(D_i)} \subset V$. This contradicts the fact that $V$ consists purely of characters of reducible representations whereas $C$ contains the character of an irreducible representation by assumption. So $D_i$ must contain an irreducible representation, and by Proposition \ref{facts about t map}(2), $t(D_i)$ must be a variety and hence closed. So $C=\overline{t(D_i)}=t(D_i)$. This completes the proof. 
\end{proof}

\bigskip

\subsection{Tautological Representation and Canonical Quaternion Algebra}

Now, suppose $R_0$ is a closed subset of $R(\Gamma)$, and we fix an element $\gamma \in \Gamma$. Then every representation $\rho \in R_0$ gives a matrix $\rho(\gamma) \in SL_2(\mathbb{C})$, that can be written in the following form

\begin{equation}\label{representation of rho_gamma}
  \rho(\gamma) = 
      \begin{pmatrix}
        a_{\gamma}(\rho) & b_{\gamma}(\rho)\\
        c_{\gamma}(\rho) & d_{\gamma}(\rho)
      \end{pmatrix}.
\end{equation}

\noindent It can be shown that $a_{\gamma},b_{\gamma},c_{\gamma},d_{\gamma}$ are in fact regular functions on $R_0$ for every $\gamma \in \Gamma$. Therefore, there is a map $\mathcal{P}: \Gamma \rightarrow M_2(\mathbb{C}[R_0])$ given by

\begin{equation}\label{representation of P_gamma}
  \mathcal{P}(\gamma) = 
  \begin{pmatrix}
    a_{\gamma} & b_{\gamma}\\
    c_{\gamma} & d_{\gamma}
  \end{pmatrix}
\end{equation}

\noindent where $\mathbb{C}[R_0]$ is the ring of regular functions on $R_0$. Since every $\rho \in R_0$ is a representation, i.e. a group homomorphism, it can be easily shown that $\mathcal{P}$ is also a group homomorphism. Since  $a_{\gamma}(\rho)d_{\gamma}(\rho)-b_{\gamma}(\rho)c_{\gamma}(\rho)=det(\rho(\gamma))=1$ for any $\gamma \in \Gamma$ and $\rho \in R_0$, we have $\mathcal{P}(\gamma) \in SL_2(\mathbb{C}[R_0])$. In particular, if $R_0$ is irreducible so that we can define the function field $k(R_0)$ of $R_0$ (which is the fraction field of $\mathbb{C}[R_0]$), then we can view $\mathcal{P}$ as a representation in $SL_2(k(R_0))$. Following \cite{Chin_Reid_Stover}, we call this representation $\mathcal{P}: \Gamma \rightarrow SL_2(k(R_0))$ the \textit{tautological representation}. 

For an arbitrary field $F$ of characteristic 0, a representation $\Gamma \rightarrow SL_2(F)$ is called absolutely irreducible if it remains irreducible over an algebraic closure $\overline{F}$. It can be shown that the tautological representation is absolutely irreducible if $R_0$ contains some irreducible representation, using the lemma below. We refer the reader to \cite{Chin_Reid_Stover} for more detail. 

\begin{lemma}\label{characterization of reducible representation}
    (\cite{Culler_Shalen_1}, 1.2.2.) If $F$ is an algebraically closed field of characteristic 0. Then a representation $\rho: \Gamma \rightarrow SL_2(F)$ is reducible if and only if $\chi_{\rho}(c)=2$ for each element $c$ of the commutator subgroup of $\Gamma$.
\end{lemma}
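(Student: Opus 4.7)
The plan is to prove both directions separately. The forward direction is routine simultaneous triangularization; the converse, which I expect to be the main obstacle, reduces to showing that a subgroup of $SL_2(F)$ whose every element has trace $2$ admits a common invariant line.

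\textbf{Forward direction.} Suppose $\rho$ is reducible and choose a basis whose first vector spans a $\rho(\Gamma)$-invariant line. Then every $\rho(g)$ takes the form $\begin{pmatrix} \lambda(g) & \ast \\ 0 & \lambda(g)^{-1} \end{pmatrix}$, and $g \mapsto \lambda(g)$ is a homomorphism $\Gamma \to F^{\times}$. Since $F^{\times}$ is abelian, this homomorphism is trivial on $[\Gamma,\Gamma]$, so for any commutator $c$ the matrix $\rho(c)$ has ones on the diagonal and $\chi_{\rho}(c) = 2$.

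\textbf{Converse.} Suppose $\chi_{\rho}(c) = 2$ for every $c \in [\Gamma,\Gamma]$. Since $F$ is algebraically closed and $\rho(c) \in SL_2(F)$ has characteristic polynomial $(x-1)^{2}$, each such $\rho(c)$ is either $I$ or a nontrivial unipotent. Set $H = \rho([\Gamma,\Gamma])$. The first key step is to invoke Kolchin's theorem: a subgroup of $GL_{n}(F)$ consisting of unipotent elements is simultaneously conjugate to a subgroup of upper unitriangular matrices, and so admits a common invariant line $L \subseteq F^{2}$. If $H = \{I\}$, then $\rho$ factors through the abelianization $\Gamma^{\mathrm{ab}}$, and any abelian subgroup of $SL_2(F)$ is simultaneously triangularizable over an algebraically closed field, so $\rho$ is reducible. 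Otherwise I pick $c_{0} \in [\Gamma,\Gamma]$ with $\rho(c_{0}) \neq I$; being a nontrivial unipotent, $\rho(c_{0})$ has a \emph{unique} invariant line, which must equal $L$. To upgrade $L$ to a $\rho(\Gamma)$-invariant line, for $g \in \Gamma$ the identity $g[\Gamma,\Gamma]g^{-1} = [\Gamma,\Gamma]$ shows that $\rho(g) L$ is also $H$-invariant; applying $\rho(c_{0})$ and using uniqueness forces $\rho(g) L = L$. Hence $\rho$ is reducible.

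The crux is the converse, and specifically the production of a common invariant line for $H$ from trace-$2$ information. Kolchin's theorem is the natural black box, and I would cite it outright. If a self-contained proof in the $SL_2$ setting is desired, one can instead exploit the Fricke trace identity $\mathrm{tr}(AB) + \mathrm{tr}(AB^{-1}) = \mathrm{tr}(A)\mathrm{tr}(B)$, which applied to pairs $A, B \in H$ of trace $2$ constrains the possible Jordan forms of $AB$ and allows one to construct the shared fixed vector by hand; this is closer in spirit to the original Culler--Shalen argument.
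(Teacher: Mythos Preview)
The paper does not supply its own proof of this lemma; it simply records the statement and attributes it to Culler--Shalen (Lemma~1.2.2 of that reference). There is therefore nothing in the paper to compare against beyond the citation.

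Your argument is correct. The forward direction is the routine triangularization, and for the converse your two-step strategy---apply Kolchin's theorem to the unipotent group $H=\rho([\Gamma,\Gamma])$ to obtain a common invariant line $L$, then use normality of $[\Gamma,\Gamma]$ together with the uniqueness of the fixed line of a nontrivial unipotent to show $L$ is $\rho(\Gamma)$-invariant---is the standard route and is carried out cleanly. The separate treatment of the degenerate case $H=\{I\}$ via simultaneous triangularizability of commuting matrices over an algebraically closed field is also correct.

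One minor remark: the original Culler--Shalen argument, being tailored to $SL_2$, avoids invoking Kolchin's theorem as a black box. In rank two one can argue directly that two trace-$2$ unipotents without a common fixed line already generate an element of trace different from $2$ (a short matrix computation after conjugating one of them to $\begin{pmatrix}1&1\\0&1\end{pmatrix}$). Your remark about the Fricke identity points in this direction. Either approach is fine; yours is slightly less elementary but more transparent and generalizes better.
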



Now, let $C$ be an irreducible component of $X(\Gamma)$ that contains the character of an irreducible representation. By Proposition \ref{facts about t map}, there exists an irreducible closed subset $D \subset R(\Gamma)$ such that $t(D)=C$. Clearly, $D$ must contain an irreducible representation. Let $k(D),k(C)$ be the function fields of $D,C$ respectively. Then we can view $k(D)$ as a field extension of $k(C)$, induced by the regular map $t$. Let $\mathcal{P}: \Gamma \rightarrow SL_2(k(D))$ be the tautological representation associated with $D$. Since $D$ is induced by $C$, we denote this representation by $\mathcal{P}_C$. This representation is absolutely irreducible by our previous arguments. We can then consider the $k(C)$-subalgebra of $M_2(k(D))$, denoted by $A_{k(C)}$, which is generated by the elements of $\mathcal{P}_C(\Gamma)$, i.e.

\begin{equation}\label{canonical quaternion algebra}
    A_{k(C)} =\{ \sum_{i=1}^{n}  \alpha_i \mathcal{P}_C(\gamma_i): \alpha_i \in k(C), \gamma_i \in \Gamma \}
\end{equation}

\noindent In fact, $A_{k(C)}$ is a quaternion algebra over $k(C)$ and assumes a special form of Hilbert symbol, which is summarized in the following theorem. 

\begin{theorem}\label{Hilbert symbol for A_KC}
    (\cite{Chin_Reid_Stover}, Corollary 2.9.) Let $\Gamma$ be a finitely generated group and $C$ an irreducible component of $X(\Gamma)$ that contains the character of an irreducible representation. Let $g,h \in \Gamma$ be two elements such that there exists a representation $\rho \in R(\Gamma)$ with character $\chi_{\rho} \in C$ for which the restriction of $\rho$ to $\langle g,h \rangle$ is irreducible. Let $A_{k(C)}$ be the $k(C)$-algebra defined above. Then $A_{k(C)}$ is a quaternion algebra over $k(C)$ with a Hilbert symbol: ($\frac{I_{g}^{2}-4,~ I_{[g,h]}-2}{k(C)}$).
\end{theorem}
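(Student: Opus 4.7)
The plan is to verify that $A_{k(C)}$ is a quaternion algebra and then exhibit an explicit pair of anticommuting trace-zero generators whose squares realize the claimed Hilbert symbol.

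For the first step, I would show that the tautological representation $\mathcal{P}_C$, restricted to $\langle g, h\rangle$, is absolutely irreducible. The hypothesis gives $\rho \in R(\Gamma)$ with $\chi_\rho \in C$ whose restriction to $\langle g, h\rangle$ is irreducible, so by Lemma \ref{characterization of reducible representation} some commutator $c$ of elements of $\langle g,h\rangle$ satisfies $\chi_\rho(c) \neq 2$. The regular function $I_c$ on $D$ then takes a value different from $2$ at some preimage of $\chi_\rho$ under $t$, hence $I_c \neq 2$ as an element of $k(D)$. Applying Lemma \ref{characterization of reducible representation} over $\overline{k(D)}$ shows that $\mathcal{P}_C|_{\langle g,h\rangle}$ is absolutely irreducible, and Burnside's density theorem then implies that the $k(C)$-algebra $A_{k(C)}$ is a $4$-dimensional central simple $k(C)$-algebra, i.e., a quaternion algebra.

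To identify the Hilbert symbol, set $G = \mathcal{P}_C(g)$ and $H = \mathcal{P}_C(h)$, and define
\begin{equation*}
    i := 2G - I_g \cdot \mathbf{1}, \qquad j := GH - HG.
\end{equation*}
Both are trace-zero. The Cayley--Hamilton relation $G^2 = I_g G - \mathbf{1}$ immediately gives $i^2 = (I_g^2 - 4)\mathbf{1}$, and the identity $G^2 H - HG^2 = I_g(GH - HG)$ forces $ij + ji = 2(G^2 H - HG^2) - 2I_g(GH - HG) = 0$, so $i$ and $j$ anticommute. The remaining and most delicate calculation is $j^2$. The strategy is to first derive the auxiliary identity
\begin{equation*}
    GH + HG = I_h G + I_g H + (I_{gh} - I_g I_h)\mathbf{1},
\end{equation*}
obtained from $(G+H)^2 = (I_g + I_h)(G+H) - (2 + I_g I_h - I_{gh})\mathbf{1}$, and then multiply on the right by $G$ (respectively on the left by $H$) to produce $GHG = I_{gh} G + H - I_h \mathbf{1}$ and $HGH = I_{gh} H + G - I_g\mathbf{1}$.

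Expanding $j^2 = (GH)^2 + (HG)^2 - GH^2 G - HG^2 H$, substituting $(GH)^2 = I_{gh} GH - \mathbf{1}$, $(HG)^2 = I_{gh} HG - \mathbf{1}$, $GH^2 G = I_h GHG - G^2$, $HG^2 H = I_g HGH - H^2$, and finally the explicit formulas for $GHG$ and $HGH$, one sees that every non-scalar contribution in $G$ and $H$ cancels, leaving
\begin{equation*}
    j^2 = (I_g^2 + I_h^2 + I_{gh}^2 - I_g I_h I_{gh} - 4)\cdot\mathbf{1}.
\end{equation*}
The classical Fricke identity $I_{[g,h]} = I_g^2 + I_h^2 + I_{gh}^2 - I_g I_h I_{gh} - 2$ reduces this to $j^2 = (I_{[g,h]} - 2)\mathbf{1}$. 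The main obstacle is precisely this calculation: it requires coordinating several quadratic trace identities simultaneously so that every non-scalar term drops out. Once $i^2$, $j^2$, and $ij = -ji$ have been verified, $\{\mathbf{1}, i, j, ij\}$ is a standard quaternion basis for $A_{k(C)}$ realizing the Hilbert symbol $\bigl(\frac{I_g^2 - 4,\, I_{[g,h]} - 2}{k(C)}\bigr)$, completing the proof.
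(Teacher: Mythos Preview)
The paper does not prove this theorem; it is quoted verbatim as Corollary~2.9 of \cite{Chin_Reid_Stover} and used as a black box. So there is no in-paper argument to compare against, and your proposal is in fact supplying a proof where the paper gives none. What you have written is essentially the standard argument (as in \cite{Chin_Reid_Stover} and in Maclachlan--Reid \cite{Mac_and_Reid}, \S3.6): build the trace-zero elements $i=2G-I_g$ and $j=GH-HG$ and verify the quaternion relations by repeated use of Cayley--Hamilton and the Fricke identity. Your computations of $i^2$, $ij+ji$, and $j^2$ are all correct.

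Two points deserve tightening. First, your appeal to Burnside only gives that the $\overline{k(D)}$-span of $\mathcal{P}_C(\Gamma)$ is $M_2(\overline{k(D)})$; it does not by itself say the $k(C)$-span is $4$-dimensional over $k(C)$, since $k(C)\subsetneq k(D)$ and $\Gamma$ need not be generated by $g$ and $h$. The standard fix is to pick $\gamma_1,\dots,\gamma_4\in\Gamma$ whose images are $\overline{k(D)}$-independent, and then for arbitrary $\gamma$ solve the linear system $I_{\gamma\gamma_j^{-1}}=\sum_i c_i\, I_{\gamma_i\gamma_j^{-1}}$; the coefficient matrix has entries and nonzero determinant in $k(C)$, forcing $c_i\in k(C)$. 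Second, for $\{1,i,j,ij\}$ to be a genuine Hilbert-symbol basis you need both $I_g^2-4$ and $I_{[g,h]}-2$ to be nonzero in $k(C)$. You correctly argue the latter from the irreducibility hypothesis via Lemma~\ref{characterization of reducible representation}, but you never address the former; the hypotheses as stated do not obviously exclude $I_g\equiv\pm 2$ on $C$ (e.g.\ $\rho(g)$ parabolic for every $\rho$), so this nonvanishing is really an implicit assumption in the theorem, and you should flag it explicitly.
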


\noindent This quaternion algebra $A_{k(C)}$ is called the canonical quaternion algebra of $C$, and it will be the central object of study in this article. 

\bigskip

\subsection{Cusped Hyperbolic 3-Manifolds}
We will be particularly interested in the case where $\Gamma=\pi_{1}(M)$ for a $M$ a hyperbolic 3-manifold. In this thesis, a hyperbolic 3-manifold will mean a connected, oriented and complete manifold $M$ of the form $\mathbb{H}^3/\Gamma$, where $\mathbb{H}^3$ is the hyperbolic 3-space and $\Gamma \cong \pi_1(M)$ is a torsion free discrete subgroup of $Isom^{+}(\mathbb{H}^3) \cong PSL_2(\mathbb{C})$. We refer the reader to \cite{Ratcliffe} and \cite{Thurston_2} for more detail on hyperbolic 3-manifolds. 

We will focus on the situation where $M$ is a knot or link exterior in $S^3$. In this case, M is noncompact, of finite volume, and has a finite union of incompressible tori as its boundary. By Mostow Rigidity Theorem(see \cite{Ratcliffe}, section 11.8), there is a unique (up to equivalence) discrete and faithful representation $\rho_0: \Gamma \rightarrow PSL_2(\mathbb{C})$, coming from the holonomy of the complete structure on $M$. This $\rho_0$ lifts to a representation $\hat{\rho_0}: \Gamma \rightarrow SL_2(\mathbb{C})$ according to the study of Thurston(\cite{Culler_Shalen_1}, 3.1.1.). In general, this lift won't be unique even up to equivalence. 

We define a canonical component $X_0(\Gamma) \subset X(\Gamma)$ to be an irreducible component of $X(\Gamma)$ containing the character of some $\hat{\rho_0}$ describe above, i.e. the character of a lift of the discrete faithful representation $\rho_0: \Gamma \rightarrow PSL_2(\mathbb{C})$. Notice that there may be multiple canonical components of $X(\Gamma)$ as the lift won't be unique. Any such canonical component is clearly a complex affine variety. Its dimension is given by the number of cusps of the manifold (in our case, it's the number of components of the link) as proved by Thurston in the following important theorem. 

\begin{theorem}\label{Thurston's theorem}
    (\cite{Culler_Shalen_1}, 3.2.1.) Let $M$ be a noncompact hyperbolic 3-manifold of finite volume with $d$ cusps and set $\Gamma=\pi_1(M)$. Then any canonical component $X_0(\Gamma)$ is a $d$-dimensional complex affine algebraic variety. In particular, when $M$ is a one (resp. two)-cusped hyperbolic 3-manifold, the canonical component $X_0(\Gamma)$ is a complex affine curve (resp. surface). 
\end{theorem}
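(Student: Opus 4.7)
The plan is to compute the local dimension of $X(\Gamma)$ at $\chi_0 := \chi_{\hat{\rho}_0}$ via Weil's infinitesimal deformation theory and Poincar\'e--Lefschetz duality, then upgrade this infinitesimal count to an actual dimension statement using Thurston's hyperbolic Dehn surgery theorem to exhibit genuine deformations.

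First, I would identify the Zariski tangent space at the representation level, $T_{\hat{\rho}_0} R(\Gamma)$, with the space of $1$-cocycles $Z^1(\Gamma, \mathrm{Ad}\,\hat{\rho}_0)$ valued in $\mathfrak{sl}_2(\mathbb{C})$ under the adjoint action. Since $\hat{\rho}_0$ is absolutely irreducible, $H^0(\Gamma, \mathrm{Ad}\,\hat{\rho}_0) = 0$, so the conjugation orbit through $\hat{\rho}_0$ has dimension $3$ and its tangent space is exactly $B^1(\Gamma, \mathrm{Ad}\,\hat{\rho}_0)$. Taking the quotient by this free action identifies $T_{\chi_0} X(\Gamma)$ with $H^1(\Gamma, \mathrm{Ad}\,\hat{\rho}_0)$.

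Next, I would show $\dim_{\mathbb{C}} H^1(\Gamma, \mathrm{Ad}\,\hat{\rho}_0) = d$. Since $M$ deformation retracts onto a compact $3$-manifold $\overline{M}$ with $d$ incompressible boundary tori $T_1, \ldots, T_d$, I would apply Poincar\'e--Lefschetz duality to the pair $(\overline{M}, \partial \overline{M})$ with coefficients in the flat $\mathfrak{sl}_2$-bundle associated to $\mathrm{Ad}\,\hat{\rho}_0$. For each peripheral torus, $\dim_{\mathbb{C}} H^1(T_i, \mathrm{Ad}) = 2$, giving $2d$ total, and the cup-product pairing on $H^1(\partial \overline{M}, \mathrm{Ad})$ is nondegenerate symplectic. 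Poincar\'e--Lefschetz duality forces the image of the restriction $H^1(\overline{M}, \mathrm{Ad}) \to H^1(\partial \overline{M}, \mathrm{Ad})$ to be Lagrangian of complex dimension $d$; combined with the vanishing $H^0 = 0$ in the long exact sequence of the pair, this yields the required count.

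Finally, to convert this tangent-space dimension into a statement about $X_0(\Gamma)$, I would invoke Thurston's hyperbolic Dehn surgery theorem: the cusp holonomies furnish local complex-analytic parameters on the space of nearby representations modulo conjugation, producing a $d$-parameter family of pairwise nonconjugate representations whose characters sweep out a $d$-dimensional analytic subvariety of $X(\Gamma)$ through $\chi_0$. Together with the tangent-space upper bound from the cohomology calculation, this shows $\chi_0$ is a smooth point of $X(\Gamma)$ of local dimension exactly $d$, so the unique irreducible component through $\chi_0$, namely $X_0(\Gamma)$, is $d$-dimensional. Affineness is immediate since $X_0(\Gamma)$ is a closed subvariety of the affine variety $X(\Gamma) \subset \mathbb{C}^N$. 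The main obstacle is the Lagrangian dimension count for the restriction map on adjoint cohomology: it relies on $\hat{\rho}_0$ sending each peripheral subgroup to a nontrivial parabolic subgroup (so that $H^1(T_i, \mathrm{Ad})$ has dimension $2$ rather than $3$) and on the somewhat subtle identification of the cup-product pairing with a symplectic form under which the image from $H^1(\overline{M}, \mathrm{Ad})$ is isotropic. The geometric input from Thurston's surgery theorem is deep but would be cited rather than reproven.
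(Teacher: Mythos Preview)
The paper does not prove this statement; Theorem~\ref{Thurston's theorem} is quoted from Culler--Shalen (3.2.1), where it is attributed to Thurston, and is used throughout as a black box. There is therefore no in-paper argument to compare your proposal against.

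Your outline follows the standard cohomological route, but there is a gap in the upper-bound step. Knowing that the image of the restriction $H^1(\overline{M},\mathrm{Ad}) \to H^1(\partial\overline{M},\mathrm{Ad})$ is Lagrangian of dimension $d$, together with $H^0(\overline{M},\mathrm{Ad})=0$, does \emph{not} by itself yield $\dim H^1(\overline{M},\mathrm{Ad})=d$. From the long exact sequence of the pair and Poincar\'e--Lefschetz duality (plus $\chi(\overline{M})=0$, so $\dim H^1=\dim H^2$) one finds that the kernel of the restriction has dimension $\dim H^1(\overline{M},\partial\overline{M})-\dim H^0(\partial\overline{M})=\dim H^1(\overline{M})-d$, which combined with the image dimension $d$ is a tautology. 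The missing ingredient is injectivity of the restriction map, i.e.\ infinitesimal rigidity of $M$ relative to its cusps; this is a separate nontrivial fact (Garland, Weil, Matsushima--Murakami type arguments, or a direct geometric argument as in Thurston's notes) and does not follow from the pieces you list. Without it you have only the lower bound $\dim H^1 \geq d$, not the upper bound you claim.

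That said, the strong form of Thurston's hyperbolic Dehn surgery theorem you invoke at the end already does the full job: it shows that the logarithms of the meridian holonomies give local holomorphic coordinates on $X(\Gamma)$ near $\chi_0$, hence a local biholomorphism with an open set in $\mathbb{C}^d$. Cited in that form, it gives both the exact dimension and smoothness at $\chi_0$, and the cohomology computation becomes a consistency check rather than a logically necessary step. So your proposal can be repaired either by supplying the rel-cusp rigidity argument or by leaning entirely on the strong Dehn surgery statement.
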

\section{Azumaya Algebra and Brauer group}

\subsection{Azumaya Algebra and Brauer group over Fields}
In this section, we give a short introduction to Azumaya algebras over fields and schemes. We start with the more basic one: Azumaya algebras over fields. Let $k$ be a field, there are many equivalent ways to define an Azumaya algebra over $k$, we pick the one that is easier to memorize as the definition. A $k$-algebra (possibly noncommutative) $A$ is said to be central if the center of $A$ is $k$. It is said to be simple if the only 2-sided ideals of $A$ are $0$ and $A$ itself. 

\begin{definition}
    An Azumaya algebra over $k$ is a finite-dimensional, central and simple algebra over $k$.
\end{definition}

Let $A$ be an Azumaya algebra over $k$, then $A \cong M_n(D)$ for some division $k$-algebra, and $D$ is uniquely determined up to isomorphism(we refer the reader to chapter 2 of \cite{Gille} for more detail). We put an equivalence relation on the set of Azumaya algebras over $k$ as follows: two Azumaya algebras $A$, $B$ are equivalent, denoted by $A \sim B$, if $A \cong M_n(D)$ and $B \cong M_m(D)$ for some $m,n >0$. This equivalence relation is called Morita equivalence among Azumaya algebras over $k$. With Morita equivalence, we can give the definition of the Brauer group as follows. 

\begin{definition} 
Fix a field $k$. The Brauer group of $k$, denoted by $Br(k)$, is defined to be the equivalence classes of Azumaya algebras over $k$ under Morita equivalence, with group operation induced by the tensor product over $k$, ie: $[A] \cdot [B] = [A \otimes_{k}B]$, where $\cdot$ is the group operation on $Br(k)$ and $[A]$ stands for the equivalence class of $A$.
\end{definition}

\bigskip

\subsection{Azumaya Algebra and Brauer group over Schemes}
We have defined Azumaya algebras over an arbitrary field $k$. We now generalize this definition to Azumaya algebras over a commutative local ring. Let $R$ be a commutative local ring with residue field $k$, an algebra $A$ over $R$ is an Azumaya algebra if $A$ is free of finite rank $r \geq 1$ as an R-module and $A \otimes_{R} k$ is an Azumaya algebra over $k$. 

Next, we want to generalize this concept even further to define an Azumaya algebra over a scheme. Let $X$ be a Noetherian scheme. We assume familiarity with the structure sheaf (denoted by $\mathcal{O}_X$), stalk at a point $x$ (denoted by $\mathcal{O}_{X,x}$), residual field at a point $x$ (denoted by $k(x)$, coherent sheaf of modules, locally free sheaf of modules etc. An Azumaya algebra $\mathcal{A}$ on $X$ is a locally free sheaf of $\mathcal{O}_X$-algebras such that for every $x \in X$, $\mathcal{A}_{x}$ is an Azumaya algebra over the local ring $\mathcal{O}_{X,x}$. We will be interested in quaternion Azumaya algebras, which are Azumaya algebras that have rank 4 as locally free $\mathcal{O}_X$-modules. 

Similar to the equivalence of Azumaya algebras over fields, we can define the notion of equivalence between two Azumaya algebras over the scheme $X$. Let $\mathcal{A}$, $\mathcal{B}$ be two Azumaya algebras over $X$. $\mathcal{A}$, $\mathcal{B}$ are said to be equivalent if there exist locally free sheaves of $\mathcal{O}_X$-modules $\mathcal{F}_1$ and $\mathcal{F}_2$ such that $\mathcal{A} \otimes_{\mathcal{O}_X} End_{\mathcal{O}_X}(\mathcal{F}_1) \cong \mathcal{B} \otimes_{\mathcal{O}_X} End_{\mathcal{O}_X}(\mathcal{F}_2)$, where $End_{\mathcal{O}_X}(\mathcal{F})$ is the sheaf of $\mathcal{O}_X$-module endomorphisms of an $\mathcal{O}_X$-module $\mathcal{F}$. It can be shown that this is an equivalence relation. Similar to the Brauer group of a field $k$, we define the Brauer group of $X$, denoted by $Br(X)$, to be the group of equivalence classes of Azumaya algebras over $X$.

\bigskip
\subsection{Extension on Azumaya Algebras}
Given an Azumaya algebra $A$ over the function field $k(X)$, there are two types of extension of $A$ that are important to us: extension of $A$ over $X$ and extension of $A$ over codimension one points of $X$. We first consider extension of $A$ over $X$. If $f: X \rightarrow Y$ is a morphism of schemes, then there is an induced homomorphism $\tilde{f}:  Br(Y) \rightarrow Br(X)$ (see \cite{Poonen}, section 6.6 for more detail). Let $k(X)$ be the function field of the scheme $X$, ie: the residue field at the generic point of $X$. We know that there is a natural morphism $f: spec(k(X)) \rightarrow X$ sending the point $spec(k(X))$ to the generic point of $X$. Then this map induces:

\begin{equation*}
    \tilde{f}: Br(X) \rightarrow Br(spec(k(X))) \cong Br(k(X))
\end{equation*}

\noindent If $X$ is a regular integral Noetherian scheme, then $\tilde{f}$ is injective. An Azumaya algebra $A$ over $k(X)$ is said to extend to an Azumaya algebra $\mathcal{A}$ over $X$ if $[A] = \tilde{f}([\mathcal{A}])$ where $[A]$ stands for the equivalence class of $A$. Intuitively, an Azumaya algebra $\mathcal{A}$ over $X$ gives rise to an Azumaya algebra over the residue field at every point $x$ of $X$(which we call the specialization of $\mathcal{A}$ at $x$). If $A$ is an Azumaya algebra over $k(X)$, which is the residue field at the generic point of $X$, then $A$ extends to $\mathcal{A}$ over $X$ if the specialization of $\mathcal{A}$ at the generic point is isomorphic to $A$. 

Now, let $x$ be a codimension one point of $X$, then $R=O_{X,x}$ is a local ring with fraction field $k(X)$. An Azumaya algebra $A$ over $k(X)$ is said to extend over $x$ if there is an Azumaya algebra $A_R$ over $R$ such that $A$ is isomorphic to $A_R \otimes_{R} k(X)$. We have the following results relating extension of $A$ over codimension one points of $X$ and its extension over $X$, whose proofs can be found in \cite{Milne1980} and \cite{COP2002}. 

\begin{theorem}\label{extension of Azumaya algebra}
     Let $X$ be a regular integral Noetherian scheme of dimension at most 2 with function field $K$ that is quasi-projective over a field or a Dedekind ring, then:
    
    (1) ~An Azumya algebra $A_K$ over $K$ extends to an Azumaya algebra $\mathcal{A}$ over $X$ if and only if it extends over all codimension one points $x$ of $X$.
    
    (2) ~Let $x$ be a codimension one point of $X$ and let $A_K$ be a quaternion algebra over $K$ with Hilbert symbol $(\frac{\alpha,\beta}{K})$. The tame symbol of $A_K$ at $x$, denoted by $\{\alpha,\beta\}_{x}$, is defined to be the class of $(-1)^{ord_{x}(\alpha) ord_{x}(\beta)} \frac{\beta^{ord_{x}(\alpha)}}{\alpha^{ord_{x}(\beta)}}$ in $k(x)^{*}/(k(x)^{*})^2$, where $k(x)$ is the residue field at $x$. If $k(x)$ has characteristic different from 2, then this symbol is trivial if and only if $A_K$ extends over $x$.
\end{theorem}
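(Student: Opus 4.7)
The plan is to prove the two parts separately: part (1) follows from purity for the Brauer group on a regular scheme of dimension at most two, while part (2) is a direct computation with the tame symbol attached to a discrete valuation ring.

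For part (1), the forward direction is formal: if $A_K$ is the generic fibre of a global Azumaya algebra $\mathcal{A}$ on $X$, then for each codimension one point $x$ the stalk $\mathcal{A}_x$ is an Azumaya algebra over the regular local ring $R = \mathcal{O}_{X,x}$ whose base change to $K$ recovers $A_K$. The substantive direction is the converse, whose key input is the Auslander--Goldman purity theorem for two-dimensional regular local rings together with its scheme-theoretic globalization: for a regular Noetherian scheme of dimension at most two, the natural map
\begin{equation*}
Br(X) \longrightarrow \bigcap_{x \in X^{(1)}} Br(\mathcal{O}_{X,x}) \subset Br(K)
\end{equation*}
is a bijection, where $X^{(1)}$ denotes the codimension one points. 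Given local extensions $A_R$ for each $R = \mathcal{O}_{X,x}$, purity produces a unique class in $Br(X)$ restricting to $[A_K]$, and the quasi-projectivity hypothesis is what lets us realize this Brauer class by a genuine Azumaya $\mathcal{O}_X$-algebra rather than only a cohomology class in $H^2_{et}(X, \mathbb{G}_m)$.

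For part (2), one uses the residue exact sequence attached to a discrete valuation ring $R$ with fraction field $K$ and residue field $k(x)$ of characteristic different from two:
\begin{equation*}
0 \longrightarrow Br(R)[2] \longrightarrow Br(K)[2] \xrightarrow{\partial_x} k(x)^*/(k(x)^*)^2 \longrightarrow 0.
\end{equation*}
The explicit formula for $\partial_x$ applied to a Hilbert symbol class $(\frac{\alpha,\beta}{K})$ is obtained by a symbol-level calculation: by bilinearity one reduces to the case where $\alpha$ is a uniformizer and $\beta$ a unit (or vice versa), where the tame symbol is immediate. The algebra $A_K$ then extends over $x$ exactly when its class lies in $Br(R)[2]$, that is, when $\partial_x([A_K])$ vanishes, which by the formula is precisely the triviality of the tame symbol in $k(x)^*/(k(x)^*)^2$.

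The main obstacle is the purity step in part (1): both regularity and the dimension-two bound are essential, since in higher dimensions or on singular schemes one generally finds Brauer classes ramified along codimension one loci that cannot be detected by local Brauer data alone. A related subtlety hidden in the quasi-projectivity hypothesis is promoting a cohomology class in $H^2_{et}(X, \mathbb{G}_m)$ to an actual Azumaya algebra. The restriction $\mathrm{char}\, k(x) \neq 2$ in part (2) is also indispensable: in residual characteristic two the tame symbol gets replaced by a Witt-vector or Artin--Schreier style obstruction, and the clean formula above is no longer available.
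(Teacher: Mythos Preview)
The paper does not actually prove this theorem; immediately before the statement it says ``whose proofs can be found in \cite{Milne1980} and \cite{COP2002}'' and then moves on. So there is no in-paper argument to compare against. Your sketch is the standard one and is essentially what those references contain: Auslander--Goldman purity (and its globalization for regular schemes of dimension $\leq 2$) for part~(1), and the residue map for a DVR together with the explicit tame-symbol formula for part~(2). Your remarks about the role of quasi-projectivity (to ensure the cohomological Brauer class is represented by an honest Azumaya algebra) and about the $\mathrm{char}\, k(x)\neq 2$ hypothesis are accurate refinements that the paper does not spell out.
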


If we let $\Gamma =\pi_1(M)$ be the fundamental group of some hyperbolic knot (or link) complement and $C$ be a canonical component of $X(\Gamma)$, then we can construct the canonical quaternion algebra $A_{k(C)}$ as described in section 2.2. A fundamental question is: 

\bigskip
\begin{center}
Does this Azumaya algebra over the function field of $C$ extend to an Azumaya algebra over $C$? 
\end{center}
\bigskip

\noindent \cite{Chin_Reid_Stover}  made progress in the case where $M$ is a hyperbolic knot complement and hence $C$ is an affine curve (by Theorem \ref{Thurston's theorem}). In this paper, we will partly generalize their results to hyperbolic link complements with two components, and hence $C$ will be affine surfaces. To be more specific, we will study the case where $M=S^3 - L$ for $L$ an arithmetic two-bridge link with two components. In fact, there are three such links: the Whitehead link $5_{1}^{2}=(8/3), ~6_{2}^{2}=(10/3)$, and $6_{3}^{2}=(12/5)$. We will show that for these 3 links, the canonical quaternion algebra $A_{k(S)}$ does not extend to the surface $S$. This will be the content of the next section.
\section{Proof of Theorem \ref{main result}}

\subsection{General Idea} 

We will prove our main result, which is Theorem \ref{main result}. In this subsection, we present the general approach. We then analyze the three arithmetic links one by one in the following subsections. 

Let $L$ be a two bridge link in $S^3$ with two components. Then the fundamental group $\pi_1(S^3-L)$ can be presented in the following form: 

\begin{equation}\label{presentation of fund group}
\pi_1(S^3-L)=\langle a,b~|~awa^{-1}w^{-1}=1 \rangle
\end{equation}

\noindent where $a,b$ are two meridional generators. Such a link will admit a reduced diagram by considering the projection of the link onto $\mathbb{R}^2$. The reduced diagram will yield two numbers, $\alpha$ and $\beta$, that completely classify the isotopy class of the link. We call $\alpha$ the torsion, and $\beta$ the crossing number of the link. Then, the link will be denoted by $(\alpha / \beta)$, which is called the Schubert normal form of the link(we refer the reader to \cite{Zieschang}, chapter 12, for more information on Schubert normal form). If the Schubert normal form of $L$ is given by $(\alpha /\beta)$, then the word $w$ has the form: 

\begin{equation}\label{expression of w}
w=b^{\epsilon_1}a^{\epsilon_2}b^{\epsilon_3} \cdots a^{\epsilon_{\alpha-2}}b^{\epsilon_{\alpha-1}},
\end{equation}

\noindent where $\epsilon_{i}=(-1)^{\lfloor \frac{i\beta}{\alpha} \rfloor}$, and $\lfloor x \rfloor$ stands for the floor function of $x$. We refer the reader to \cite{Rolfsen} and \cite{Zieschang} for a thorough treatment on the subject.  

Let $\Gamma=\pi_1(S^3-L)$, $X(\Gamma)$ the character variety of $\Gamma$, and let $C$ be a canonical component of $X(\Gamma)$. By Thurston's theorem(Theorem \ref{Thurston's theorem}), $C$ is a complex affine surface in this case since $L$ has two components. In order to avoid any confusion in terms of the notation, we replace $C$ by $S$ to denote the canonical surface. We want to get a clearer description of $S$, i.e. we want to compute $N$ such that $S \subset \mathbb{C}^N$ and the defining polynomials of $S$. To achieve this goal, we need the following lemmas regarding traces in $SL_2(\mathbb{C})$. 

\begin{lemma}\label{trace identities}
    (see \cite{Mac_and_Reid}, section 3.4.) Suppose $X,Y \in SL_2(\mathbb{C})$, then we have the following identities:

    \begin{equation}\label{trace identity equations}
        \begin{aligned}
            & tr(X) =tr(X^{-1}) \\
            & tr(XY) = tr(X)tr(Y)-tr(XY^{-1}) \\
            & tr([X,Y])=tr^2(X)+tr^2(Y)+tr^2(XY)-tr(X)tr(Y)tr(XY)-2
        \end{aligned}
    \end{equation}
\end{lemma}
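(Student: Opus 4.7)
The plan is to derive all three identities from a single input, namely the Cayley--Hamilton theorem applied to $2\times 2$ matrices of determinant one. For any $X\in SL_2(\mathbb{C})$, the characteristic polynomial is $\lambda^2 - tr(X)\lambda + 1$, so Cayley--Hamilton gives $X^2 - tr(X)X + I = 0$. Multiplying by $X^{-1}$ rearranges this to the key relation
\begin{equation*}
    X + X^{-1} = tr(X)\cdot I.
\end{equation*}
Taking the trace of both sides and using $tr(I)=2$ immediately yields the first identity $tr(X^{-1}) = tr(X)$.

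For the second identity, I would multiply the analogous relation $Y + Y^{-1} = tr(Y)\cdot I$ on the left by $X$ and take traces; this produces $tr(XY) + tr(XY^{-1}) = tr(X)\,tr(Y)$, which is exactly the statement. For the third (Fricke) identity, I would first apply the second identity to the pair $A = XY$ and $B = X^{-1}Y^{-1}$. Noting that $(X^{-1}Y^{-1})^{-1} = YX$, and combining the cyclic invariance of trace with the first identity to get $tr(X^{-1}Y^{-1}) = tr((XY)^{-1}) = tr(XY)$ and $tr(XY\cdot YX) = tr(X^2Y^2)$, this reduces the commutator trace to
\begin{equation*}
    tr([X,Y]) = tr(XY)^2 - tr(X^2 Y^2).
\end{equation*}
Then I would substitute $X^2 = tr(X)X - I$ and $Y^2 = tr(Y)Y - I$ into the product $X^2Y^2$, expand, and take the trace. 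The cross terms collapse using the cyclic property and the fact that $tr(X), tr(Y)$ are scalars, producing $tr(X^2 Y^2) = tr(X)\,tr(Y)\,tr(XY) - tr(X)^2 - tr(Y)^2 + 2$. Feeding this back into the displayed equation gives the stated formula.

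The main obstacle here is not difficulty but bookkeeping: since $SL_2(\mathbb{C})$ is noncommutative, one must consistently keep track of the order of multiplication when applying Cayley--Hamilton, and carefully interweave the cyclic invariance of trace, the inversion formula $(AB)^{-1}=B^{-1}A^{-1}$, and the first identity $tr(X) = tr(X^{-1})$. Because all three formulas are entirely standard in the theory of Kleinian groups and Fricke-type trace identities, I would present this derivation for completeness only, or, as the excerpt already does, simply refer the reader to Maclachlan--Reid.
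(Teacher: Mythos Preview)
Your derivation is correct and complete: the Cayley--Hamilton relation $X + X^{-1} = tr(X)\,I$ yields the first two identities immediately, and your reduction of $tr([X,Y])$ to $tr(XY)^2 - tr(X^2Y^2)$ followed by expanding $X^2Y^2 = (tr(X)X - I)(tr(Y)Y - I)$ is exactly the standard Fricke computation. All the bookkeeping (cyclic invariance, $(AB)^{-1} = B^{-1}A^{-1}$, $tr(X^{-1}Y^{-1}) = tr(XY)$) checks out.

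As for comparison with the paper: there is nothing to compare against. The paper does not prove this lemma at all; it merely states the identities and cites Maclachlan--Reid, section~3.4. Your proposal supplies precisely the elementary argument that the reference contains, so if anything you are filling in what the paper omits rather than diverging from it. Your closing remark---that one would either present this for completeness or simply cite the reference---is exactly what the paper does (it chooses the latter).
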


\noindent From \eqref{trace identity equations}, it is easy to see that for $\Gamma$ as above, any $\chi_{\rho} \in X(\Gamma)$ is completely determined by $\chi_{\rho}(a),\chi_{\rho}(b)$ and $\chi_{\rho}(ab)$. Set $x:=\chi_{\rho}(a), ~y:=\chi_{\rho}(b),~ z:=\chi_{\rho}(ab)$. Then $X(\Gamma)$ will be an affine closed subset of $\mathbb{C}^3$, and hence $S$ will be an affine surface in $\mathbb{C}^3$ with coordinates $x,y,z$. 

We are interested in the cases where $L$ is one the three arithmetic 2-bridge links: $L=5_{1}^{2},~6_{2}^{2} ~or~ 6_{3}^{2}$. The Schubert normal forms of the three links are given by $(8/3)$, $(10/3)$, and $(12/5)$ respectively. If we let $A=a^{-1}$ and $B=b^{-1}$, then the word $w$ appeared in the presentation of the fundamental group for the three links is given by: 

\begin{equation}\label{w for links}
    \begin{aligned}
        & w_1=baBABab, \\
        & w_2=babABAbab, \\
        & w_3=baBAbabABab
    \end{aligned}
\end{equation}

\noindent which correspond to $5_{1}^{2}$, $6_{2}^{2}$, $6_{3}^{2}$ respectively. Thanks to the work of Harada(\cite{Harada}) and Landes(\cite{Landes}), there is a unique canonical surface in $X(\Gamma)$ when $L$ is one of these three links. These canonical surfaces will be denoted $S_1,S_2,S_3$ that corresponds to $5_{1}^{2}$, $6_{2}^{2}$, $6_{3}^{2}$ respectively, and are defined as the vanishing set of the polynomials $p_1,p_2,p_3$ where:

\begin{equation}\label{vanishing polynoimals for links}
    \begin{aligned}
        & p_1=z^3-xyz^2+(x^2+y^2-2)z-xy, \\
        & p_2=z^4-xyz^3+(x^2+y^2-3)z^2-xyz+1, \\
        & p_3=z^3-xyz^2+(x^2+y^2-1)z-xy
    \end{aligned}   
\end{equation}

Our main objective is to study the canonical quaternion algebra $A_{k(S)}$ for these three links. By Theorem \ref{Hilbert symbol for A_KC}, a Hilbert symbol for the canonical quaternion algebra is given by ($\frac{I_{g}^{2}-4,~ I_{[g,h]}-2}{k(S)}$). Since our $\Gamma$ is generated by two elements $a,b$, we can set $g=a$ and $h=b$. If we let the coordinates of $S$ be $x,y,z$ as described above, then from \ref{trace identity equations}, the Hilbert symbol is given by ($\frac{x^{2}-4,~ x^2+y^2+z^2-xyz-4}{k(S)}$). By Lemma \ref{characterization of reducible representation}, a representation $\rho: \Gamma \rightarrow SL_2(\mathbb{C})$ is reducible if and only $tr(\rho(c))=2$ for any $c \in [\Gamma,\Gamma]$. Hence, the vanishing set of $x^2+y^2+z^2-xyz-4$ will correspond to the set of characters of reducible representations(see also \cite{Petersen}, Theorem 1.1.). To show that the canonical quaternions doesn't extend, it suffices to show that the tame symbol along some codimension one point(in this case it's a curve) is nontrivial by Theorem \ref{extension of Azumaya algebra}. We will consider the algebraic set $C$ given by the intersection of $S$ with the set of characters of reducible representations, and show that the tame symbol along some irreducible curve in $C$ is nontrivial. For simplicity, we will always use $f$ to denote the defining polynomial of $S=S_1, S_2 ~or~ S_3$, and use $(\frac{\alpha,\beta}{k(S)})$ to denote the canonical quaternion algebra where 
\begin{equation}\label{def of alpha and beta}
    \begin{aligned}
        & \alpha=x^2-4 \\
        & \beta = x^2+y^2+z^2-xyz-4.
    \end{aligned}
\end{equation}
Hence, the algebraic set $C$ will be the vanishing set of both $f$ and $\beta$, i.e. 
\begin{equation}\label{def of C}
    C: f=\beta=0.
\end{equation}

Our proofs will be heavily based on facts in algebraic geometry. We assume that the reader is familiar with the important concepts in the subject, such as varieties (affine or projective, we always assume our varieties are irreducible), function fields, morphisms, rational maps, divisors and genus etc. For the remaining parts of the paper, all the varieties are assumed to be either affine or projective, ie: they lie in some affine space $A^n$ or some projective space $\mathbb{P}^n$. We use $K$ to denote the base field of a variety and $k(X)$ to denote the function field of a variety $X$. All the varieties will be defined over some algebraically closed field $K$ with characteristic 0. In fact, we will almost always be dealing with $K=\mathbb{C}$. We refer the reader to \cite{Hartshorne}, \cite{Shafarevich}, and \cite{Silverman} for more thorough treatment on algebraic geometry. 

\bigskip

\subsection{$5_{1}^{2}$ Case(Whitehead Link)}
In this case, the canonical surface $S_1$ is given by the equation $f=0$ where

\begin{equation}\label{equation for S_1}
f=z^3-xyz^2+(x^2+y^2-2)z-xy
\end{equation}

\noindent Substituting \eqref{def of alpha and beta} with $\beta=0$ into $f=0$ gives $z=\frac{xy}{2}$. Then, we substitute this equation back into $f$ and $\beta$ to see that the algebraic set $C$(vanishing set of $f$ and $\beta$) consists of four lines, given as follows:

 \begin{equation*}
    \begin{aligned}
        & L_1: x=2,y=z,\\
        & L_2: x=-2,y=-z, \\
        & L_3: y=2,x=z, \\
        & L_4: y=-2,x=-z. 
    \end{aligned}
\end{equation*}

\noindent We consider the line $L=L_3$ and compute the tame symbol along this line. Recall from Theorem \ref{extension of Azumaya algebra} that the tame symbol is given by $(-1)^{ord_{L}(\alpha) ord_{L}(\beta)} \frac{\beta^{ord_{L}(\alpha)}}{\alpha^{ord_{L}(\beta)}}$. Clearly $\alpha=x^2-4$ doesn't vanish entirely on $L$, so $ord_{L}(\alpha)=0$. The tame symbol then becomes $\frac{1}{\alpha^{ord_{L}(\beta)}}$, so it suffices to compute $ord_L(\beta)$(see \eqref{def of alpha and beta} for definition of $\beta$). To do this, we need the following lemma. 

\begin{lemma}\label{intersection multi and order}
    Let $X,Y \subset spec(\mathbb{C}[x,y,z])$ be surfaces given by defining polynomials $f,g$ respectively. Let $C \subset X \cap Y$ be an irreducible curve. The intersection multiplicity of $X,Y$ along $C$ is defined by $m_{C}(X,Y)=l_{\mathcal{O}_C}(\mathcal{O}_{C}/(f,g))$, where $\mathcal{O}_{C}$ is the local ring at the generic point of $C$, and $l_{R}(M)$ is the length of the $R$-module $M$(the longest length of a chain of submodules of $M$). Then $ord_{C}(f)=m_{C}(X,Y)$ when we view $f$ as a rational function on $Y$.
\end{lemma}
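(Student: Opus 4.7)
The plan is to unwind the definition of the valuation $\mathrm{ord}_C$ on the surface $Y$ and to identify the quotient $\mathcal{O}_{Y,C}/(f)$ with $\mathcal{O}_C/(f,g)$. First I would identify $\mathcal{O}_C$ as the localization of $\mathbb{C}[x,y,z]$ at the height-two prime $\mathfrak{p}$ cutting out $C$: this is a two-dimensional regular local ring whose residue field is the function field $k(C)$ of the curve. Since $C \subset Y$, the element $g$ lies in $\mathfrak{p}$ and is nonzero in $\mathcal{O}_C$, so the local ring of $Y$ at the generic point of $C$ is $\mathcal{O}_{Y,C} = \mathcal{O}_C/(g)$, a one-dimensional Noetherian local ring.

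Next I would upgrade $\mathcal{O}_{Y,C}$ to a discrete valuation ring. This requires $Y$ to be regular at the generic point of $C$, which is the case in the applications to follow (it can be verified from the Jacobian of the defining polynomial of each $S_i$ along the lines $L_j$ appearing in the subsequent subsections). Granting regularity, $\mathcal{O}_{Y,C}$ is a DVR with uniformizer $\pi$, and the valuation $\mathrm{ord}_C$ is by definition given by $\mathrm{ord}_C(h) = n$ whenever $h = u\pi^n$ with $u \in \mathcal{O}_{Y,C}^{\times}$.

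Third I would invoke the standard length interpretation of a DVR valuation: for any nonzero $h \in \mathcal{O}_{Y,C}$, one has $\mathrm{ord}_C(h) = l_{\mathcal{O}_{Y,C}}\bigl(\mathcal{O}_{Y,C}/(h)\bigr)$, because $\mathcal{O}_{Y,C}/(\pi^n)$ has an obvious composition series of length $n$. Applying this to $h = f$ and using the canonical identification
\[
\mathcal{O}_{Y,C}/(f\mathcal{O}_{Y,C}) \;\cong\; \mathcal{O}_C/(f,g),
\]
which comes directly from $\mathcal{O}_{Y,C} = \mathcal{O}_C/(g)$, produces the formula. Since the module in question is of finite length and supported at a single point, the lengths $l_{\mathcal{O}_{Y,C}}$ and $l_{\mathcal{O}_C}$ agree (both count composition factors isomorphic to the common residue field $k(C)$), giving the chain
\[
\mathrm{ord}_C(f) \;=\; l_{\mathcal{O}_{Y,C}}\bigl(\mathcal{O}_{Y,C}/(f)\bigr) \;=\; l_{\mathcal{O}_C}\bigl(\mathcal{O}_C/(f,g)\bigr) \;=\; m_C(X,Y).
\]

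The main obstacle is the regularity hypothesis on $Y$ at the generic point of $C$: without it $\mathcal{O}_{Y,C}$ need not be a DVR and $\mathrm{ord}_C$ is no longer defined as an honest valuation. For the specific canonical surfaces $S_1, S_2, S_3$ and the particular lines that will carry the tame-symbol computations, this can be settled by a finite Jacobian check, which I would dispatch (or cite) before invoking the lemma in each subcase.
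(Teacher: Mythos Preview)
Your proposal is correct and follows essentially the same route as the paper: identify $\mathcal{O}_C$ with $A_{\mathfrak{p}}$ for $A=\mathbb{C}[x,y,z]$, pass to $\mathcal{O}_{Y,C}\cong A_{\mathfrak{p}}/(g_{\mathfrak{p}})$, and read off $\mathrm{ord}_C(f)$ as the length of $A_{\mathfrak{p}}/(f_{\mathfrak{p}},g_{\mathfrak{p}})$. The paper is terser and simply asserts $(f_{\mathfrak{p}})=m^d$ in $A_{\mathfrak{p}}/(g_{\mathfrak{p}})$ without naming the DVR hypothesis, whereas you make the needed regularity of $Y$ along $C$ explicit and justify the equality of lengths over $\mathcal{O}_{Y,C}$ and $\mathcal{O}_C$; these are welcome clarifications rather than a different argument.
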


\begin{proof} Let $A=\mathbb{C}[x,y,z]$, then $X=spec(A/(f))$ and $Y=spec(A/(g))$. Let $\mathfrak{p}$ be the defining prime ideal of $C$, ie: $C = spec(A/\mathfrak{p})$. We write $f_{\mathfrak{p}},g_{\mathfrak{p}}$ to be the images of $f,g$ in $A_{\mathfrak{p}}$ respectively. Then by definition, when we view $f$ as a rational function on $Y$, $ord_C(f)=d$ where $(f_{\mathfrak{p}})=m^{d}$ for $m$ the maximal ideal in the local ring $(A/(g))_{\mathfrak{p}} \cong A_{\mathfrak{p}}/(g_{\mathfrak{p}})$. On the other hand, 

\begin{equation*}
    m_{C}(X,Y)=l_{\mathcal{O}_C}(\mathcal{O}_{C}/(f,g))=l_{A_{\mathfrak{p}}}(A_{\mathfrak{p}}/(f_{\mathfrak{p}},g_{\mathfrak{p}}))=l_{A_{\mathfrak{p}}/(g_{\mathfrak{p}})}((A_{\mathfrak{p}}/(g_{\mathfrak{p}}))/(f_{\mathfrak{p}}))=d
\end{equation*}

\noindent Hence $m_C(X,Y)=ord_{C}(f)$ when view $f$ as a rational function on $Y$. This completes the proof. 
\end{proof}

From Lemma \ref{intersection multi and order}, to compute $ord_{L}(\beta)$, it suffices to compute $m_C(S_1,Y)$ where $Y$ is the surface in $\mathbb{C}^3$ given by $\beta=0$. To compute the intersection multiplicity, we borrow the following lemma from Eisenbud-Harris(\cite{Eisenbud-Harris}). 

\begin{lemma}\label{inter_multi_trans}
    (\cite{Eisenbud-Harris}, Theorem 1.26.) Let $A,B \subset X$ be subvarieties of a smooth variety $X$ (affine or not) such that every irreducible component $C$ of $A \cap B$ has $codim(C)=codim(A)+codim(B)$, where $codim(A)$ stands for the codimension of a subvariety $A \subset X$. Then for each such $C$, the intersection multiplicity $m_C(A,B)=1$ if and only if $A,B$ intersect transversely at a general point of $C$, i.e. the points where $A,B$ intersect transversely form a dense subset of $C$.
\end{lemma}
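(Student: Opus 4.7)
The plan is to reduce the statement to a local calculation at the generic point of $C$. Write $\eta$ for the generic point of $C$ and $R = \mathcal{O}_{X,\eta}$ for the local ring of $X$ along $C$. Since $X$ is smooth and $codim(C) = codim(A) + codim(B)$, the ring $R$ is regular of dimension $codim(C)$, with maximal ideal $\mathfrak{m}$ and residue field $k(\eta) = k(C)$. Writing $I_A, I_B \subset R$ for the localized ideals of $A$ and $B$, the definition used in Lemma \ref{intersection multi and order} gives $m_C(A,B) = l_R(R/(I_A + I_B))$, so everything reduces to relating the condition ``this length equals one'' with transversality of $A$ and $B$ at a general point of $C$.

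The key algebraic observation is that in a regular (hence Cohen--Macaulay) local ring, an $\mathfrak{m}$-primary ideal $J$ satisfies $l_R(R/J) = 1$ precisely when $J = \mathfrak{m}$. Hence the multiplicity equals one iff $I_A + I_B = \mathfrak{m}$. Reducing modulo $\mathfrak{m}^2$ and applying Nakayama, this is equivalent to asking that the images of $I_A$ and $I_B$ jointly span the cotangent space $\mathfrak{m}/\mathfrak{m}^2$, which is a $k(C)$-vector space of dimension $codim(C)$. Since the image of $I_A$ in $\mathfrak{m}/\mathfrak{m}^2$ has dimension at most $codim(A)$, with equality iff $A$ is smooth at $\eta$, and similarly for $I_B$, the spanning condition forces both $A$ and $B$ to be smooth at $\eta$ and their conormal subspaces to be complementary in $\mathfrak{m}/\mathfrak{m}^2$. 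Geometrically, that complementarity is exactly $T_{\eta}A + T_{\eta}B = T_{\eta}X$, i.e.\ transversality at the generic point of $C$.

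Next I would spread this generic statement to a dense open $U \subset C$. Smoothness of $A$ and $B$ at $\eta$ propagates to an open neighborhood in $C$ (in characteristic zero this is generic smoothness), and the identity $I_A + I_B = \mathfrak{m}$ at $\eta$ extends to $U$ using coherence of the sheaf $\mathcal{O}_X/(I_A+I_B)$: since its stalk at $\eta$ is $k(\eta)$, the same must hold at closed points of a suitable dense open. On such an open, transversality at the generic point is equivalent to pointwise transversality at closed points, and the reverse implication (from pointwise transversality at a general closed point to the length-one condition at $\eta$) follows from running the same cotangent-space calculation at a closed point together with upper-semicontinuity of fiber length.

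The main obstacle will be the spreading-out step: one must ensure that no extra length appears at special closed points of $C$, which relies on $\mathcal{O}_X/(I_A+I_B)$ being Cohen--Macaulay along $C$. This follows from the proper-intersection hypothesis combined with Cohen--Macaulayness of the regular ring $R$, but it is the only genuinely delicate point. Once this bookkeeping is in place, the remainder of the proof is a clean linear-algebra statement in $\mathfrak{m}/\mathfrak{m}^2$ and the equivalence follows.
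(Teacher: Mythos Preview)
The paper does not supply its own proof of this lemma; it is quoted from Eisenbud--Harris and invoked as a black box, so there is nothing in the paper to compare your argument against.

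Your argument is essentially correct for the naive multiplicity $m_C(A,B)=l_R\bigl(R/(I_A+I_B)\bigr)$ that you take from Lemma~\ref{intersection multi and order}. The cotangent-space step is right: from the exact sequence $I_A/(I_A\cap\mathfrak m^2)\to\mathfrak m/\mathfrak m^2\to\bar{\mathfrak m}/\bar{\mathfrak m}^2\to 0$ together with $\dim(\bar{\mathfrak m}/\bar{\mathfrak m}^2)\ge\dim R/I_A=\operatorname{codim}(B)$, the image of $I_A$ has dimension at most $\operatorname{codim}(A)$, with equality iff $A$ is smooth at $\eta$; so $I_A+I_B=\mathfrak m$ forces smoothness of both and complementarity of the conormal spaces, i.e.\ transversality at $\eta$. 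Two small corrections to your spreading-out discussion. First, the length $l_{\mathcal O_{X,p}}\bigl(\mathcal O_{X,p}/(I_A+I_B)_p\bigr)$ at a closed point $p\in C$ is infinite whenever $\dim C>0$ (the ideal is not $\mathfrak m_p$-primary there), so ``upper-semicontinuity of fiber length'' is not the right tool. Second, the Cohen--Macaulay worry in your last paragraph is unnecessary. The clean argument in both directions is simply that transversality of $A$ and $B$ along $C$ is an open condition on points of $X$ (a Jacobian-rank condition), so it holds at the generic point $\eta$ of the irreducible set $C$ if and only if it holds on some nonempty, hence dense, open subset of $C$. With that replacement your proof is complete, and for the paper's actual application---two hypersurfaces in $\mathbb C^3$---the naive length agrees with any other reasonable definition of $m_C$.
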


By Lemma \ref{inter_multi_trans}, it suffices to check whether $S_1$ and $Y$ intersect transversely at a general point along $L$. Let's compute the normal vectors (the gradients) of $S_1$ and $Y$ along $L$. For $Y$, the normal vector is: 

\begin{equation*}
     \begin{pmatrix}
        2x-yz\\
        2y-xz\\
        2z-xy
      \end{pmatrix} =\begin{pmatrix}
        0\\
        4-x^2\\
        0
      \end{pmatrix}, ~when ~~y=2, ~x=z
\end{equation*}

\noindent For $S_1$, the normal vector along $L$ is given by: 

\begin{equation*}
     \begin{pmatrix}
        -2\\
        -x^3+3x\\
        2
      \end{pmatrix}
\end{equation*}

\noindent Clearly the two normal vectors are not collinear at any point with $x \neq \pm 2$ along $L$. Hence the two surfaces intersect transversely at a general point of $L$, and hence $ord_L(\beta)=m_L(S_1,Y)=1$.

So the tame symbol becomes $\frac{1}{\alpha}$. We want to decide if $\alpha$ iss a square in $k(L)$. But $\alpha$ is a regular function on $L$, and $L$ is a complex line and hence $L \cong \mathbb{C}$. In particular, the ring of regular functions on $L$ is a UFD and $\alpha=x^2-4=(x-2)(x+2)$ is the unique factorization of $\alpha$. Hence $\alpha$ can't be a square. So the tame symbol along this line $L\subset S_1$ is nontrivial, and hence the canonical quaternion algebra $A_{k(S_1)}$ doesn't extend to an Azumaya algebra over $S_1$.

\bigskip

\subsection{$6_{3}^{2}$ Case}
In this case, the surface $S_2$ is defined by the vanishing set of polynomial: 

\begin{equation}\label{equation for S_2}
f=z^3-xyz^2+(x^2+y^2-1)z-xy
\end{equation}

\noindent We first show that in this case the set $C$(given by \eqref{def of C}) is an elliptic curve, which is a curve of genus one(see \cite{Silverman} for a thorough discussion of elliptic curves). To achieve this goal, we borrow the following theorem from \cite{Shafarevich}. We refer the reader to \cite{Shafarevich} pp. 263-264 for a discussion of the tree of infinitely near points and the proof of the theorem.

\begin{theorem}\label{genus formula for singular curves}
    (\cite{Shafarevich}, pp. 264) Let $X \subset \mathbb{P}^2$ be a curve whose defining polynomial has degree $n$. Let $g$ be the genus of $X$ and $d_i$ be the multiplicity at a particular infinitely near point $i$, then $g = \frac{(n-1)(n-2)}{2} - \Sigma\frac{d_i(d_i-1)}{2}$, where the summation is taken over all the infinitely near points (of any singular point of $X$).
\end{theorem}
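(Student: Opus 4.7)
The plan is to use iterated blow-ups combined with the adjunction formula. Start from the observation that the arithmetic genus of a degree $n$ plane curve is $\frac{(n-1)(n-2)}{2}$: on $\mathbb{P}^2$ with $K = -3H$ and curve class $nH$, the adjunction formula gives $2p_a - 2 = nH \cdot (n-3)H = n(n-3)$. The geometric genus $g$ of a singular curve $X$ is by definition the genus of its normalization, which we realize concretely as the smooth model obtained by resolving all singularities via a finite sequence of point blow-ups on the ambient surface. The whole formula will then drop out once we know how much the arithmetic genus falls at each blow-up.

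The key local step is the following. Let $Y$ be a smooth surface, let $C \subset Y$ be a curve with a point $p$ of multiplicity $d$, and let $\pi: \tilde Y \to Y$ be the blow-up at $p$ with exceptional divisor $E$. The strict transform is $\tilde C = \pi^{*}C - dE$, and $K_{\tilde Y} = \pi^{*}K_Y + E$. Using the identities $E^2 = -1$, $\pi^{*}(\cdot)\cdot E = 0$, and bilinearity, a direct expansion gives
\begin{equation*}
\tilde C \cdot (\tilde C + K_{\tilde Y}) = C \cdot (C + K_Y) - d(d-1),
\end{equation*}
so by adjunction $p_a(\tilde C) = p_a(C) - \frac{d(d-1)}{2}$. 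Note that multiplicity-one points contribute zero, which matches the fact that blowing up a smooth point of $C$ leaves $C$ isomorphic to its strict transform.

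Next, iterate. Starting from $X \subset \mathbb{P}^2$, pick a singular point $p_1$ of multiplicity $d_1 \ge 2$ and blow up. The strict transform may still be singular, both at points lying off the exceptional divisor (remaining singularities of $X$) and at points lying on $E$ (those infinitely near to $p_1$ in the first neighborhood). Blow up each such point in turn, with the corresponding multiplicities $d_2, d_3, \ldots$; by the resolution theorem for plane curve singularities this process terminates after finitely many steps and yields a smooth model $\tilde X$ with $g(\tilde X) = g$. Summing the genus drops gives
\begin{equation*}
g = \frac{(n-1)(n-2)}{2} - \sum_i \frac{d_i(d_i - 1)}{2},
\end{equation*}
with the index $i$ running over all infinitely near points of multiplicity at least two (smooth points drop out automatically).

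The main obstacle is the bookkeeping for the tree of infinitely near points: one has to confirm that each singularity in every neighborhood contributes exactly once and that the resolution terminates after finitely many blow-ups. The cleanest way is to invoke the $\delta$-invariant, which is finite for any plane curve singularity and strictly decreases under blow-up of a singular point; this forces termination and ensures that only finitely many infinitely near points appear with multiplicity $\ge 2$. Once this finiteness is in hand, the additivity of the formula across the resolution tree follows immediately from the local adjunction computation above.
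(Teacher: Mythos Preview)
The paper does not prove this statement at all: it is quoted as a result from Shafarevich, and the text immediately preceding the theorem says ``We refer the reader to \cite{Shafarevich} pp.~263--264 for a discussion of the tree of infinitely near points and the proof of the theorem.'' So there is no in-paper proof to compare against.

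That said, your argument is the standard one and matches the approach in Shafarevich: compute the arithmetic genus of the plane curve via adjunction, show that blowing up a point of multiplicity $d$ drops $p_a$ of the strict transform by $d(d-1)/2$ (your computation $\tilde C\cdot(\tilde C+K_{\tilde Y}) = C\cdot(C+K_Y) - d(d-1)$ is correct), and iterate over the tree of infinitely near points until the strict transform is smooth. One small remark on termination: rather than invoking the $\delta$-invariant separately, you can simply note that $p_a(\tilde C)\ge g$ at every stage (the arithmetic genus of any curve dominates the geometric genus of its normalization), so the strictly decreasing integer sequence $p_a$ is bounded below and the process halts. This is slightly cleaner than tracking $\delta$, though of course $p_a - g = \sum \delta_p$, so the two arguments are equivalent.
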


\begin{lemma}\label{ellipticity of C}
    $C$ is an elliptic curve.
\end{lemma}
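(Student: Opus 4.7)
The plan is to project $C$ onto the $(x,y)$-plane and analyze the resulting plane quartic using Theorem~\ref{genus formula for singular curves}. Treating $\beta = z^2 - xyz + (x^2+y^2-4)$ as a monic polynomial in $z$ and dividing $f$ by it gives the identity $f = z\beta + (3z - xy)$, so on $C$ the relation $z = xy/3$ holds everywhere. Hence $C$ is biregularly isomorphic (via the inverse map $(x,y) \mapsto (x,y,xy/3)$) to the affine plane quartic
\[
g(x,y) \;:=\; 2 x^2 y^2 - 9 x^2 - 9 y^2 + 36 \;=\; 0,
\]
so the two curves share the same smooth projective model and the same geometric genus. From this point on I would work with $\{g = 0\}$.

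Next I would locate the singularities of the projective closure of $\{g=0\}$ in $\mathbb{P}^2$, cut out by $2 x^2 y^2 - 9 x^2 w^2 - 9 y^2 w^2 + 36 w^4 = 0$. A short check of the partial derivatives $\partial g/\partial x = 2x(2y^2 - 9)$ and $\partial g/\partial y = 2y(2x^2 - 9)$ rules out affine singularities (no common zero of the two derivatives also satisfies $g = 0$). Setting $w = 0$ reduces the equation to $2 x^2 y^2 = 0$, so the only candidates at infinity are the points $p_1 = [1{:}0{:}0]$ and $p_2 = [0{:}1{:}0]$. Working in the chart $x = 1$ around $p_1$, the defining polynomial has leading form $2 y^2 - 9 w^2$, a product of two distinct linear factors, so $p_1$ is an ordinary node of multiplicity $2$; by symmetry so is $p_2$. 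Because an ordinary node is resolved by a single blow-up and contributes no further infinitely near singular points, the sum in the genus formula involves only these two points.

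Applying Theorem~\ref{genus formula for singular curves} with $n = 4$ and two singularities of multiplicity $d_i = 2$ gives
\[
g \;=\; \frac{(4-1)(4-2)}{2} - 2 \cdot \frac{2 \cdot 1}{2} \;=\; 3 - 2 \;=\; 1.
\]
Since we work over $\mathbb{C}$ and $C$ clearly has rational points, its smooth projective model is a genus one curve with a distinguished point, i.e.\ an elliptic curve. The main subtle step, and the place where I would be most careful, is verifying that the two points at infinity are genuinely ordinary nodes rather than more degenerate singular points carrying a nontrivial tree of infinitely near points; once the leading-form computation confirms that the two tangent directions are distinct at each of $p_1$ and $p_2$, the rest of the argument is mechanical.
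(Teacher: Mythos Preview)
Your proof is correct and follows essentially the same route as the paper: project $C$ isomorphically onto the plane quartic via $z=xy/3$, pass to the projective closure, identify the two singular points at infinity, and apply Theorem~\ref{genus formula for singular curves}. The only difference is in how the singular contribution is pinned down: you compute the leading form $2y^2-9w^2$ in the chart $x=1$ and read off directly that each point is an ordinary node (hence contributes $\tfrac{2\cdot1}{2}=1$ with no further infinitely near singularities), whereas the paper argues more indirectly---using the $x\leftrightarrow y$ symmetry to write the total defect as $2t$ and then the constraints $g\ge 0$, $t\ge 1$ to force $t=1$. Your local computation is the more informative of the two, since it actually identifies the singularity type rather than bounding it; the paper's argument is a shortcut that avoids that computation but gives less information.
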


\begin{proof}
We consider the projection map $P$ of $C$ onto the xy-plane, let $C^{'}=P(C)$ be the image of the projection. Notice that:

\begin{equation*}
f=z^3-xyz^2+(x^2+y^2-1)z-xy=\beta z+3z-xy.
\end{equation*}

\noindent Since $C$ is defined by $f=0,\beta=0$, we must have that $z=\frac{xy}{3}$ for any $(x,y,z)$ on $C$. Substituting $z=\frac{xy}{3}$ back into $\beta=0$, we get that $C^{'}$ can be defined by $h=0$ where $h=9x^2+9y^2-2x^2y^2-36$. It's clear that $P$ is an isomorphism with inverse $P^{-1}: (x,y) \rightarrow (x,y,\frac{xy}{3})$. So $g(C)=g(C^{'})$, where $g(C)$ is the genus of the curve $C$. It thus suffices to calculate the genus of $C^{'}$. 

Notice that $C^{'}$ is a smooth affine curve over the complex, whose projective completion in $\mathbb{P}^2$ is given by 
\begin{equation*}
    \tilde{C}: 9x^2t^2+9y^2t^2-2x^2y^2-36t^4=0.
\end{equation*}
\noindent It's easy to see that this completion $\tilde{C}$ has two singular points at infinity: $(1:0:0),(0:1:0)$. Since any affine curve is birational to its projective completion, we have $g(C^{'})=g(\tilde{C})$, and so it suffices to find $g(\tilde{C})$. 

Since $\tilde{C}$ is a plane curve in $\mathbb{P}^2$, the genus formula(Theorem \ref{genus formula for singular curves}) gives:

\begin{center}
$g(\tilde{C})=\frac{(n-1)(n-2)}{2}-\Sigma_{i}\frac{d_i(d_i-1)}{2}$,
\end{center}

\noindent where $n$ is the degree of the defining polynomial of $\tilde{C}$ and $d_i's$ are multiplicities of all infinitely near points. In our case, $n=4$, and we have two singular points at infinity. Notice that our defining polynomial of $\tilde{C}$ is symmetric in $x$ and $y$. So the two singular points at infinity will have the same tree of infinitely near points, and hence $\Sigma_{i}\frac{d_i(d_i-1)}{2}=2\Sigma_{j}\frac{d_j(d_j-1)}{2}$ where $d_j's$ are multiplicities corresponding to infinitely near points of $(1:0:0)$. In particular, $g(\tilde{C})=3-2t$ for some integer $t \geq 0$. Since genus is always nonnegative, we must have $g(\tilde{C})=1,t=1$ or $g(\tilde{C})=3,t=0$. But since $(1:0:0)$ is singular, there is at least one $d_j \geq 2$. So $t \geq 1$ and we must have $t=1, g(\tilde{C})=1$. So $g(C)=g(C^{'})=g(\tilde{C})=1$ and this completes the proof.
\end{proof}

Using the same arguments and similar computations as the previous example, we get that the inverse of the tame symbol along $C$ is again given by $\alpha=x^2-4$. We want to check if  $\alpha$ is a square in $k(C)^{\times}$. From the proof of Lemma \ref{ellipticity of C}, we know that $C$ is isomorphic to $C{'}$, its projection onto xy-Plane. Moreover, $\alpha$, when viewed as a rational function over $C^{'}$, has the same expression: $\alpha=x^2-4$. So it suffices to decide if $\alpha=g^2$ for some $g$ in $k(C^{'})$. We have $\alpha=(x+2)(x-2)$, and it's easy to compute 
\begin{equation*}
    \begin{aligned}
        & div(x+2)=2P_1-Q_1-Q_2, \\
        & div(x-2)=2P_2-Q_1-Q_2,
    \end{aligned}
\end{equation*}
\noindent where $div(h)$ represents the divisor class of the rational function $h$, $P_1=(-2,0),P_2=(2,0)$ and $Q_1,Q_2$ are two points at infinity. Hence $div(\alpha)=2P_1+2P_2-2(Q_1+Q_2)$. If $\alpha=g^2$, then $div(g)=P_1+P_2-Q_1-Q_2$. So it suffices to decide whether the divisor $D=P_1+P_2-Q_1-Q_2$ is principal or not. Since our curve $C'$ is an elliptic curve, we can apply the following criterion to check the principality of $D$.

\begin{theorem}\label{criteria for principal divisors}
    (\cite{Silverman}, Chapter III, Corollary 3.5.) Let $E$ be an elliptic curve and $D=\Sigma n_{P}(P) \in Div(E)$. Then $D$ is principal if and only if $\Sigma n_{P}=0$ and $\bigoplus [n_{P}]P = O$. (The first sum is of integers, and the second is addition on E).
\end{theorem}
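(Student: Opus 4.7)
This is a classical consequence of the Abel--Jacobi isomorphism for an elliptic curve, and my plan follows the standard route through Riemann--Roch. The ``only if'' direction is essentially automatic: any principal divisor on a smooth projective curve has degree zero, giving $\sum n_P = 0$; what needs proof is that the weighted sum $\bigoplus [n_P] P$ vanishes in $E$, and this is really the statement that summation descends to a well-defined group homomorphism $\mathrm{Pic}^0(E) \to E$ whose kernel is trivial. The ``if'' direction is the bulk of the work and is where I would focus the argument.

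The central construction is the map $\phi : E \to \mathrm{Pic}^0(E)$ given by $\phi(P) = [(P) - (O)]$, and my plan is to prove $\phi$ is an isomorphism of abelian groups. Bijectivity follows from Riemann--Roch on $E$: for any $D \in \mathrm{Div}^0(E)$ one computes $\ell(D + (O)) = \deg(D + (O)) - g + 1 = 1$, so the linear system $|D + (O)|$ contains a unique effective divisor, and since that divisor has degree $1$ it must be a single point $(P)$. This yields $D \sim (P) - (O)$, giving surjectivity. For injectivity, $(P) \sim (Q)$ on a curve of positive genus forces $P = Q$, since otherwise a rational function with a simple zero at $P$ and a simple pole at $Q$ would produce a degree-$1$ map $E \to \mathbb{P}^1$, contradicting $g(E) = 1$. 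That $\phi$ is a group homomorphism is precisely the chord-and-tangent law: three collinear points on $E \subset \mathbb{P}^2$ satisfy $P + Q + R = O$ in the elliptic curve group law exactly because the line cuts out the divisor $(P) + (Q) + (R) - 3(O) \sim 0$, so $\phi(P) + \phi(Q) + \phi(R) = 0$ in $\mathrm{Pic}^0(E)$.

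With $\phi$ in hand as a group isomorphism, the theorem is a short computation. For $D = \sum n_P (P)$ with $\sum n_P = 0$, linearity gives $[D] = \sum n_P [(P) - (O)] = \sum n_P \phi(P)$ in $\mathrm{Pic}^0(E)$, so $\phi^{-1}([D]) = \bigoplus [n_P] P$. Since $D$ is principal if and only if $[D] = 0$ in $\mathrm{Pic}^0(E)$, combining with the degree condition $\sum n_P = 0$ yields the equivalence in the theorem. The main obstacle is really the Riemann--Roch input together with verifying compatibility with the group law; both are standard for elliptic curves but require care to state precisely without circularity in how the group law on $E$ is set up to begin with. Once the isomorphism $E \cong \mathrm{Pic}^0(E)$ is in place, the criterion is tautological.
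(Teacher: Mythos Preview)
Your proposal is correct and is precisely the standard argument from Silverman (Chapter~III, Proposition~3.4 and Corollary~3.5), which is exactly what the paper cites without reproducing a proof. There is nothing to compare: the paper states the result as a black box from \cite{Silverman}, and your sketch faithfully reconstructs that reference's proof via the Abel--Jacobi isomorphism $E \cong \mathrm{Pic}^0(E)$ obtained from Riemann--Roch.
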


Now, suppose $D$ is principal, then we must have $P_1 \oplus P_2 \ominus Q_1 \ominus Q_2=0$, which means $P_1 \oplus P_2=Q_1 \oplus Q_2$. But we know $2P_1-Q_1-Q_2=div(x+2)$ is principal and hence $P_1 \oplus P_1=Q_1 \oplus Q_2$. Thus $P_1 \oplus P_2=Q_1 \oplus Q_2=P_1 \oplus P_1$. Then we must have $P_1=P_2$, but clearly $P_1 \neq P_2$. So $D$ can't be principal, and $\alpha$ is not a square in $k(C')$. So $A_{k(S_2)}$ doesn't extend for the same reason as the example in the previous subsection.

\bigskip

\subsection{$6_{2}^{2}$ Case} In this case, the surface $S_3$ is defined to be the vanishing set of:

\begin{equation}\label{equation for S_3}
    f=z^4-xyz^3+(x^2+y^2-3)z^2-xyz+1
\end{equation}

\noindent We will show that in this case the smooth projective model of $C$(given by \eqref{def of C}) is a hyperelliptic curve of genus 3(notice that any two smooth projective models of $C$ will be isomorphic to each other, so the smooth projective model of $C$ is well-defined up to isomorphism). But before giving a proof of this claim, we first briefly introduce the concept of hyperelliptic curves. 

A smooth projective curve $X$ over some algebraically closed field $K$ is said to be hyperelliptic if $g(X) \geq 2$ and it admits a 2-1 morphism $\phi: X \rightarrow \mathbb{P}^{1}$. The Hurwitz formula(see \cite{Silverman}, Chapter II, Theorem 5.9) implies that there precisely $2g+2$ ramification points for $\phi$, where $g$ is the genus of the curve. These points are called the Weierstrass points in the setting of hyperelliptic curves. It's a well-known fact that every hyperelliptic curve admits an affine model of the form:

\begin{equation}\label{affine model for hyperelliptic curve}
    y^2 = f(x)  
\end{equation}

\noindent for some $f\in K[x]$ separable of degree $2g+1$ or $2g+2$. This form is called the Weierstrass normal form of the hyperelliptic curve. The curve can have either a smooth point or a singular point at infinity. If it has a singular point at infinity, we can resolve the singularity through blowups to get either one smooth point or two smooth points. So finally the curve can have either one or two smooth points at infinity. If it has one smooth point at infinity, it's called an imaginary hyperelliptic curve. Otherwise, it's called a real hyperelliptic curve. Every hyperelliptic curve admits an involution map that sends $(x,y)$ to $(x,-y)$, and the Weierstrass points are precisely the fixed points of this map. We denote the image of a point $P$ under the involution by $\bar{P}$. We now prove that in our case $C$ is hyperelliptic and compute its genus. 

\begin{lemma}\label{hyperellipticity of C}
    The smooth projective model of $C$ is a hyperelliptic curve of genus 3.
\end{lemma}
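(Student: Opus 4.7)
The plan is to reduce the defining equations of $C$ to a simpler system, realize $C$ as a double cover of a rational curve, and apply Riemann--Hurwitz. First, a direct calculation shows $f - z^2 \beta = z^2 - xyz + 1$, so on $C$ we have $xyz = z^2 + 1$, and substituting this into $\beta = 0$ forces $x^2 + y^2 = 5$. Thus $C$ is cut out in $\mathbb{A}^3$ by $h_1 := x^2 + y^2 - 5$ and $h_2 := z^2 - xyz + 1$. The conic $D := V(h_1) \subset \mathbb{A}^2$ is smooth, and its projective closure $\overline{D} \subset \mathbb{P}^2$ is a smooth conic, hence isomorphic to $\mathbb{P}^1$.

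Next I would analyze the projection $\pi : C \to D$ given by $(x,y,z) \mapsto (x,y)$. Because $h_2$ is monic quadratic in $z$, this map is finite of generic degree $2$, exhibiting $C$ as a double cover of a rational curve. Its ramification locus is the zero set of the $z$-discriminant $(xy)^2 - 4$ on $D$: the system $x^2 + y^2 = 5$, $xy = \pm 2$ yields $(x \pm y)^2 \in \{1, 9\}$, giving $4$ ramification points for each sign and $8$ in total on the affine part.

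To compute the genus, I would put the covering into Weierstrass form. Rationally parametrize $\overline{D}$ by $t \in \mathbb{P}^1$ via $x = \sqrt{5}(1-t^2)/(1+t^2)$ and $y = 2\sqrt{5}\, t/(1+t^2)$, set $W := (2z - xy)(1+t^2)^2/2$ to absorb denominators, and substitute into $h_2$. Using the palindromic symmetry under $t \mapsto 1/t$ and the substitutions $s = t^2$, $u = s + s^{-1}$, the resulting polynomial of degree $8$ factors as $P(t) = -(t^4 - 18 t^2 + 1)(t^4 - 3 t^2 + 1)$. The two quartic factors are separable and coprime (their roots are $t^2 = 9 \pm 4\sqrt{5}$ and $t^2 = (3 \pm \sqrt{5})/2$, respectively), so $P$ is separable of degree $8$. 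Hence there is no ramification at $t = \infty$, and Riemann--Hurwitz gives $2g - 2 = 2(-2) + 8$, so $g = 3$. Since the smooth projective model $\tilde{C}$ admits a degree-$2$ morphism to $\mathbb{P}^1$ and has $g \geq 2$, it is hyperelliptic of genus $3$.

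I expect the main obstacle to be the bookkeeping of the Weierstrass substitution: the rescaling $W$ must be chosen to absorb the denominators coming from the rational parametrization of $D$, and the factorization of $P(t)$ into two coprime quartics must be verified (both to conclude separability and to ensure that $C$ is geometrically irreducible, so that its smooth projective model is well-defined). Once this factorization is in hand, separability is immediate, but producing it is the computational heart of the argument.
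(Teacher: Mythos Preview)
Your proposal is correct and follows essentially the same route as the paper: reduce $C$ to the system $x^2+y^2=5$, $z^2-xyz+1=0$, project to the rational conic to get a degree-$2$ cover with $8$ affine ramification points, and conclude genus $3$ via Riemann--Hurwitz. The only cosmetic difference is that the paper verifies the behaviour at infinity by an explicit blow-up of the singular point of the projective closure, whereas you pass directly to a Weierstrass model $W^2=-(t^4-18t^2+1)(t^4-3t^2+1)$ and read off separability and even degree; both are standard ways to certify the same fact, and your explicit factorisation in fact anticipates the Weierstrass form the paper derives (with a different parametrisation of the conic) immediately after the lemma.
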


\begin{proof}
Notice that $f=\beta z^2+z^2-xyz+1$. Along $C$, $f=\beta=0$ by \eqref{def of C}, which leads to $z^2-xyz+1=0$. Substituting this equation into $\beta=0$, we get $x^2+y^2-5=0$. So $C$ can be defined by
\begin{equation*}
    \begin{aligned}
        & x^2+y^2-5=0 \\
        & z^2-xyz+1=0.
    \end{aligned}
\end{equation*}
\noindent Then, the projective completion of $C$ in $\mathbb{P}^3$, which we denote as $C'$, is given by 
\begin{equation*}
    \begin{aligned}
        & x^2+y^2-5t^2=0 \\
        & t^3+z^2t-xyz=0.
    \end{aligned}
\end{equation*}
It is easy to check $C'$ has three points at infinity: $(0:0:1:0),(1:i:0:0)$ and $(1:-i:0:0)$. The point $(0:0:1:0)$ is singular and the other two are smooth. In order to find a smooth projective model for $C$, we need to blow up $C'$ at the unique singular point $(0:0:1:0)$. Since the point $(0:0:1:0)$ is contained in the affine chart of $\mathbb{P}^3$ given by $z=1$, we consider $C'$ in this affine chart by setting $z=1$. Then $C'$ is defined by 
\begin{equation*}
    \begin{aligned}
        & x^2+y^2-5t^2=0 \\
        & t^3+t-xy=0.
    \end{aligned}
\end{equation*}
\noindent The point $(0:0:1:0)$ will correspond to the origin $O=(0,0,0)$ in this affine chart. So it suffices to find the blow-up of $C'$ at $O$. Let $X$ be the blow-up of $A^3$ at $O$, where $A^3$ stands for the standard affine 3-space. Then by definition, $X\subset A^3 \times \mathbb{P}^2$, and if we use $(x,y,t)$ and $(u:v:w)$ as the coordinates of $A^3$ and $\mathbb{P}^2$ respectively, then the total inverse image of $C'$ in $X$ is given by 
\begin{equation*}
    \begin{aligned}
        & x^2+y^2-5t^2=0 \\
        &t^3+t-xy=0 \\
        & xv=yu \\
        & xw=tu \\
        & yw=tv. \\
    \end{aligned}
\end{equation*}

\noindent Consider the affine chart of $A^3 \times \mathbb{P}^2$ given by $u=1$. In this chart, the defining equations above are simplified as: 
\begin{equation*}
    \begin{aligned}
        & x^2+y^2-5t^2=0 \\
        & t^3+t-xy=0 \\
        & xv=y \\
        & xw=t \\
        & yw=tv.
    \end{aligned}
\end{equation*}

\noindent Using the fact that $y=xv,t=xw$, we can simplify the equations above to get
\begin{equation*}
    \begin{aligned}
        & x^2+x^2v^2-5x^2w^2=0 \\
        & x^3w^3+xw-x^2v=0 \\
        & xv=y \\
        & xw=t.
    \end{aligned}
\end{equation*}

\noindent If $x=0$, then $y=t=0$ and $w,v$ can be any complex numbers. This will give us an affine plane $A^2$ which corresponds to the exceptional surface of the blow-up. Hence, we assume $x \neq 0$. Then we get the curve $\tilde{C}$ defined by 
\begin{equation*}
    \begin{aligned}
        & 1+v^2-5w^2=0 \\
        & x^2w^3+w-xv=0 \\
        & xv=y \\
        & xw=t.
    \end{aligned}
\end{equation*}

\noindent This is precisely the blow-up of $C'$ we are looking for. When $x=y=t=0$, we get $w=0$, $v=\pm i$. So after blow-up, $O$ of $C'$ is replaced by two points, and it's easy to check these two points are smooth, hence the singularity of $C'$ is resolved. Notice that in the process above, if we choose different affine charts($v=1$ or $w=1$), we will get the same result. So in summary, the point $(0:0:1:0)$ is a double point for $C'$ and we resolve the singularity by doing a single blow-up. Then we get a smooth projective model $\tilde{C}$ for our original curve $C$, which has 4 points at infinity.  

Now, let $Y$ be the plane curve defined by $x^2+y^2-5=0$. Let $\phi: C \rightarrow Y$ be the projection map onto the xy-plane, i.e. $\phi(x,y,z)=(x,y)$. Notice that the projective completion of $Y$ in $\mathbb{P}^2$, which we denote as $\tilde{Y}$, is defined by $x^2+y^2-5t^2=0$. Then $\tilde{Y}$ has two smooth points at infinity: $(1:i:0)$ and $(1:-i:0)$. It's easy to check that $\tilde{Y}$ is isomorphic to $\mathbb{P}^1$ and hence $Y$ is birational to $\mathbb{P}^1$. 

Since $\tilde{C}$ and $\tilde{Y}$ are the smooth projective models of $C$ and $Y$ respectively, we can extend $\phi:C \rightarrow Y$ to a morphism $\tilde{\phi}:\tilde{C} \rightarrow \tilde{Y}$. The points at infinity on $\tilde{C}$ will then be mapped to points at infinity on $\tilde{Y}$. For any point $(x,y)$ on $Y$, the preimage $\phi^{-1}(x,y)$ will contain 2 points except when $xy=\pm2$. Hence, the morphism $\tilde{\phi}$ is generically 2-1 except at 8 affine ramification points: $(2,\pm 1),(-2,\pm 1),(1,\pm 2),(-1,\pm 2)$. But we know $\tilde{Y}$ is isomorphic to $\mathbb{P}^1$, hence there is a morphism from $\tilde{C}$ to $\mathbb{P}^1$ with 8 ramification points and degree 2. So $\tilde{C}$ is a hyperelliptic curve with genus 3, using the Hurtwitz formula. 
\end{proof}

We now derive the Weierstrass normal form for $\tilde{C}$. To achieve this goal, we need to the precise map from $\tilde{C}$ to $\mathbb{P}^1$, which means we need the precise expression for the isomorphism $\tilde{\psi}: \tilde{Y} \rightarrow \mathbb{P}^1$. Let $\psi: Y \rightarrow \mathbb{C}$ be the stereographic projection from the point $(0,\sqrt{5})$. Then $\psi$ is given by

\begin{equation*}
    \begin{aligned}
        & (x,y) \mapsto \frac{\sqrt{5}x}{\sqrt{5}-y}, ~y\neq \sqrt{5} \\
        & (0,\sqrt{5}) \mapsto \infty.
    \end{aligned}
\end{equation*}

\noindent $\psi$ can be extended to $\tilde{\psi}:\tilde{Y} \rightarrow \mathbb{P}^1$ with $(1:\pm i:0) \mapsto \pm \sqrt{5}i$, which is the isomorphism we want, and $\tilde{\psi} \circ \tilde{\phi}:\tilde{C} \rightarrow \mathbb{P}^1$ is the map we are seeking. The 8 ramification points on $\mathbb{P}^1$ are then all affine and are given by
\begin{equation*}
    \frac{2\sqrt{5}}{\sqrt{5} \pm 1}, \frac{-2\sqrt{5}}{\sqrt{5} \pm 1}, \frac{\sqrt{5}}{\sqrt{5} \pm 2}, \frac{-\sqrt{5}}{\sqrt{5} \pm 2}.
\end{equation*} 

\noindent The Weierstrass normal form of $\tilde{C}$ is given by $y^2=f(x)$ where

\begin{equation*}
    f=\Pi_{i=1}^{8}(x-x_i)
\end{equation*}

\noindent such that the $x_i$'s are the 8 ramification points listed above. We can factor out the product and get the precise expression for the normal form as

\begin{equation*}
    y^2=x^8-105x^6+1400x^4-2625x^2+625.
\end{equation*}

We now study the tame symbol along $C$. Through similar computations as before, we get that the inverse of the tame symbol is again $\alpha=x^2-4$. We want to decide if the rational function $\alpha$ is a square in $k(C)$. Since $\tilde{C}$ is a smooth projective model for $C$, $k(C)\cong k(\tilde{C})$. So it suffices to view $\alpha$ as an element in $k(\tilde{C})$ and decide if it is a square there. As in the previous example, we need to find $div(\alpha)$ to achieve this goal. 

It is not easy to compute $div(\alpha)$ directly, so we apply the following trick. Let $\alpha^{'}=x^2-4$ be a rational function on $Y$, which is the projection of $C$ down to the xy-plane as defined previously. Again, we can view $\alpha^{'}$ as an element in $k(\tilde{Y})$, and it's easy to see $\alpha=\alpha^{'} \circ \tilde{\phi}$ since $\alpha$ and $\alpha^{'} \circ \tilde{\phi}$ agree on the affine part $C$ of $\tilde{C}$. Hence $div(\alpha)=div(\alpha^{'} \circ \tilde{\phi})=\tilde{\phi}^*(div(\alpha^{'}))$, where $\tilde{\phi}^{*}$ is the map on the divisor groups induced by $\tilde{\phi}$ such that 
\begin{equation}\label{induced map on divisors}
    \tilde{\phi}^{*}(Q)= \Sigma_{P \in \tilde{\phi}^{-1}(Q)}~e_{\phi}(P)(P)
\end{equation}

\noindent for any $Q \in \tilde{Y}$, and $e_{\phi}(P)$ is the ramification index at $P$. We refer readers to \cite{Silverman} for more details on $\tilde{\phi}^{*}$. 

Let's first compute $div(\alpha^{'})=div(x+2)+div(x-2)$. For $x+2$, it's easy to check it has two zeros at $(-2,1),(-2,-1)$ and two poles at infinity, all with multiplicities 1. So $div(x+2)=P_1+P_2-Q_1-Q_2$ where $P_1=(-2,1),P_2=(-2,-1)$ and $Q_1,Q_2$ are the two points at infinity. Similarly, $div(x-2)=P_3+P_4-Q_1-Q_2$ where $P_3=(2,1),P_4=(2,-1)$. Hence, $div(\alpha^{'})=P_1+P_2+P_3+P_4-2(Q_1+Q_2)$. Notice that $P_1,P_2,P_3,P_4$ are all ramification points of $\tilde{\phi}$ while $Q_1,Q_2$ are not. So 
\begin{equation*}
    div(\alpha)=\tilde{\phi}^*(div(\alpha^{'}))=2\tilde{P_1}+2\tilde{P_2}+2\tilde{P_3}+2\tilde{P_4}-2(\tilde{Q_1}+\tilde{Q_2}+\tilde{Q_3}+\tilde{Q_4})
\end{equation*}
\noindent where $\tilde{P_i}$ lies over $P_i$, $\tilde{Q_1},\tilde{Q_2}$ over $Q_1$ and $\tilde{Q_3},\tilde{Q_4}$ over $Q_2$. 

We have thus computed $div(\alpha)$ on $\tilde{C}$. We know $\tilde{C}$ is hyperelliptic and we have found its Weierstrass normal form. To make our computations later easier, we now transform all the divisors to divisors on the normal form. Under the stereographic projection we discussed previously, the two points of $\tilde{Y}$ at infinity are mapped to $\pm \sqrt{5}i$. The points $\tilde{P_i}$'s are all ramification points and lie over $\frac{2\sqrt{5}}{\sqrt{5} \pm 1}, \frac{-2\sqrt{5}}{\sqrt{5} \pm 1}$. So in the normal form, 
\begin{equation}\label{coordinates for tilde_Pi}
    \tilde{P_1}=(\frac{2\sqrt{5}}{\sqrt{5}+1},0), \tilde{P_2}=(\frac{2\sqrt{5}}{\sqrt{5}-1},0),\tilde{P_3}=(\frac{-2\sqrt{5}}{\sqrt{5}+1},0),\tilde{P_4}=(\frac{-2\sqrt{5}}{\sqrt{5}-1},0). 
\end{equation}

\noindent The x-coordinate for $\tilde{Q_1},\tilde{Q_2}$ is $\sqrt{5}i$ and the x-coordinate for $\tilde{Q_3},\tilde{Q_4}$ is $-\sqrt{5}i$. From now on, we assume we are working in the Weierstrass normal form and the points in $div(\alpha)$ has coordinates given as above. 

As mentioned above, we want to decide if $\alpha=g^2$ for some $g \in k(\tilde{C})$. If so, $div(\alpha)=2*div(g)$, and we must have $div(g)=\tilde{P_1}+\tilde{P_2}+\tilde{P_3}+\tilde{P_4}-(\tilde{Q_1}+\tilde{Q_2}+\tilde{Q_3}+\tilde{Q_4})$. Hence, it suffices to decide whether the divisor 
\begin{equation}\label{def of D}
    D=\tilde{P_1}+\tilde{P_2}+\tilde{P_3}+\tilde{P_4}-(\tilde{Q_1}+\tilde{Q_2}+\tilde{Q_3}+\tilde{Q_4})
\end{equation}

\noindent is principal or not. Notice that $deg(D)=0$, so $[ D]$ defines an element in $Pic^{0}(\tilde{C})$, the Jacobian of the curve $\tilde{C}$. We want to decide if the class $[D]$ is trivial. To check the triviality of $[D]$, we apply the Mumford representation and Cantor's algorithm \cite{Cantor}, \cite{Sutherland}, which we now describe. 

Let $X$ be a hyperelliptic curve. Fix a Weierstrass normal form $y^2=f(x)$ for $X$. We will be primarily working with real hyperelliptic curves. Hence, for the remaining discussion, we assume our hyperelliptic curve is real with two smooth points $P_{\infty},\bar{P_{\infty}}$ at infinity, and $f$ has even degree.  

\begin{definition}
    An effective divisor $D=\Sigma P_i$ on $X$ is semi-reduced if $P_i \neq \bar{P_j}$ for any $i \neq j$. A semi-reduced divisor whose degree is less than or equal to $g(X)$ is said to be reduced. A semi-reduced affine divisor is a semi-reduced divisor with $P_i \neq P_{\infty} ~or ~\bar{P_{\infty}}$ for all $i$. 
\end{definition}  

A semi-reduced affine divisor $D=\Sigma P_i$ can be described by its Mumford representation $div[u,v]$, which we now define. Let $P_i=(x_i,y_i)$. Define $u(x)=\Pi_i (x-x_i)$ and let $v$ be the unique polynomial of degree less than $deg(u)$ such that $u|f-v^2$ and $v(x_i)=y_i$. Conversely, if $u,v \in K[x]$ satisfy: $u$ is monic, $deg(v)<deg(u)$ and $u|f-v^2$, then we say $u,v$ satisfy the Mumford criteria. In this case, write $u(x)=\Pi_i (x-x_i)$. Define $P_i:=(x_i,v(x_i))$ and $div[u,v]:=\Sigma P_i$. Then it can be shown that $div[u,v]$ is semi-reduced. The relationship between semi-reduced divisors and the Mumford representations can be summarized into the following theorem. 

\begin{proposition}\label{1-1 corr betwen SRAD and MR}
(\cite{Sutherland}) There is a 1-1 correspondence between semi-reduced affine divisors on a hyperelliptic curve $X$ and Mumford representations $div[u,v]$.
\end{proposition}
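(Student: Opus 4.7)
The plan is to establish the bijection by explicitly writing down the forward map $D \mapsto [u,v]$ and the backward map $[u,v] \mapsto div[u,v]$ from the definitions given, and then verifying that the two constructions are mutual inverses. Since $u(x) = \prod_i (x - x_i)$ is determined outright by the $x$-coordinates of the points in $D$ (with multiplicity), the real content of the proof is to show existence and uniqueness of the polynomial $v$ with $\deg(v) < \deg(u)$, $u \mid f - v^2$, and $v(x_i) = y_i$; once that is in place, the rest is bookkeeping.

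For the forward direction, given a semi-reduced affine divisor $D = \sum P_i$ with $P_i = (x_i, y_i)$, I would construct $v$ by a Hermite-type interpolation. First group together the $P_i$ sharing a common $x$-coordinate $x_0$: by semi-reducedness they must all share the same $y$-coordinate $y_0$ (otherwise two of them would be exchanged by the hyperelliptic involution), so a single point $P = (x_0, y_0)$ appears with some multiplicity $m$ in $D$. The requirement $(x - x_0)^m \mid f - v^2$ unfolds via Leibniz into $m$ conditions on $v(x_0), v'(x_0), \ldots, v^{(m-1)}(x_0)$. When $y_0 \neq 0$ these conditions can be solved iteratively and uniquely: $v(x_0) = y_0$ is forced, and each higher derivative of $v$ at $x_0$ is then determined by differentiating $f - v^2$ once more and using that $2v(x_0) = 2y_0$ is invertible. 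When $y_0 = 0$ the point is Weierstrass with $P = \bar{P}$, so semi-reducedness forces $m = 1$ and only $v(x_0) = 0$ is imposed. Assembling these local conditions via the Chinese Remainder Theorem on $K[x]/(u)$ produces a unique $v$ of degree strictly less than $\deg(u)$.

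For the backward direction, suppose $(u,v)$ satisfies the Mumford criteria. Factor $u(x) = \prod (x - x_i)$ and set $P_i = (x_i, v(x_i))$. The relation $u \mid f - v^2$ forces $v(x_i)^2 = f(x_i)$, so each $P_i$ lies on $X$, and $\deg(v) < \deg(u)$ keeps them all affine. To check that $div[u,v]$ is semi-reduced, suppose for contradiction that $P_i = \bar{P_j}$ for some $i \neq j$; then $x_i = x_j$ and $v(x_i) = -v(x_i)$, so $v(x_i) = 0$, and $x_i$ is (at least) a double root of $u$, forcing $(x - x_i)^2 \mid f - v^2$. Differentiating at $x_i$ gives $f'(x_i) = 2v(x_i)v'(x_i) = 0$, which together with $f(x_i) = 0$ contradicts the separability of $f$ in the Weierstrass form. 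The two constructions invert each other by design: the $u$ assembled from $div[u,v]$ is the original $u$ (both monic of the same degree with the same roots), and the $v$ reconstructed from $D$ agrees at the roots of $u$ with the prescribed $y_i$ and has the same degree bound, so equality follows from uniqueness of the interpolating polynomial.

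The main obstacle is the careful handling of multiplicities and their interaction with Weierstrass points: a naive interpolation breaks down at points where $y_0 = 0$, so one must leverage the fact that semi-reducedness precisely rules out higher multiplicity at Weierstrass points, while at every other point the nonvanishing of $2y_0$ makes the iterative construction of the higher derivatives of $v$ well-posed. Past this subtlety the proof reduces to linear algebra over $K[x]$ and a degree count.
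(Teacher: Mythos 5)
The paper itself does not prove this proposition --- it is imported from \cite{Sutherland} without argument --- so there is no in-text proof to compare against; your proposal has to stand on its own, and it does. Both constructions are the standard ones, and you correctly isolate the only delicate point: existence and uniqueness of $v$ when a non-Weierstrass point occurs in $D$ with multiplicity $m \geq 2$. Your Hermite-type iteration works: the conditions $(f - v^2)^{(k)}(x_0) = 0$ for $k = 0, \dots, m-1$ determine $v^{(k)}(x_0)$ successively because the leading term is $2 v(x_0) v^{(k)}(x_0)$ and $2y_0 \neq 0$; this uses characteristic $0$ (which the paper assumes throughout), as the identification of ``$(x-x_0)^m$ divides $h$'' with the vanishing of the first $m$ derivatives would require Hasse derivatives in positive characteristic. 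The observation that semi-reducedness forces all points of $D$ over a given $x_0$ to coincide, and forces $m=1$ at Weierstrass points, is exactly what makes the local systems consistent, and the CRT assembly gives the degree bound and uniqueness. The backward direction's semi-reducedness check via separability of $f$ is also correct. One cosmetic remark: in the backward direction the points $P_i = (x_i, v(x_i))$ are affine simply because they have finite coordinates; the bound $\deg(v) < \deg(u)$ is not what ``keeps them affine'' --- its role is to make the correspondence injective, which your uniqueness argument in the forward direction already supplies.
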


We can now describe every semi-reduced affine divisor by its Mumford representation, but we want to generalize this representation to any element in $Pic^{0}(X)$ so that we can do computations on those representations. To achieve this goal, we need the following theorem. 

\begin{proposition}\label{unique rep of D}
(\cite{Sutherland}, Proposition 2.4.) Let $X$ be a hyperelliptic curve of genus $g$ and let $D_{\infty}=\lceil \frac{g}{2} \rceil P_{\infty}+\lfloor \frac{g}{2} \rfloor \bar{P_{\infty}}$, then each divisor class $[D] \in Pic^{0}(X)$ can be uniquely written as $[D_0-D_{\infty}]$ where $D_0$ is an effective divisor of degree $g$ whose affine part is reduced.
\end{proposition}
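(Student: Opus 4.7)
The plan is to prove existence and uniqueness separately, both leveraging Riemann--Roch together with the structure of the hyperelliptic involution.

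For existence, given any class $[D] \in Pic^{0}(X)$, note that $\deg(D+D_{\infty})=g$, so Riemann--Roch yields $\dim L(D+D_{\infty}) \geq 1$; thus there is an effective divisor $D_0 \sim D + D_\infty$ of degree $g$, giving $[D]=[D_0-D_\infty]$. I would then semi-reduce the affine part of $D_0$ using the function $x-x_0$, whose divisor on $X$ is $P+\bar{P}-P_\infty-\bar{P}_\infty$ where $P,\bar{P}$ are the affine points over $x_0$ (with the convention $P=\bar{P}$ and multiplicity $2$ when $x_0$ is a Weierstrass value). Whenever the affine part of $D_0$ contains a conjugate pair $P+\bar{P}$, replacing it by $P_\infty+\bar{P}_\infty$ using this function yields a new representative of the same class, still effective of degree $g$, with strictly smaller affine part. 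Iterating terminates in a representative whose affine part is semi-reduced, and hence reduced since its degree is at most $g$.

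For uniqueness, suppose $D_0$ and $D_0'$ are both effective of degree $g$ with reduced affine parts and $[D_0-D_\infty]=[D_0'-D_\infty]$. Then $D_0 \sim D_0'$ via some nonzero $f \in K(X)$, so $f \in L(D_0')$. Using the unique decomposition $f=u(x)+v(x)y$ with $u,v \in K(x)$ afforded by the degree-$2$ projection $\pi: X \to \mathbb{P}^{1}$, the constraint that $(f)+D_0' \geq 0$ is incompatible with the reduced-affine-part condition on $D_0'$ unless $v=0$, since any nontrivial $v(x)y$ term would force matched conjugate pairs of poles in the affine part. Once $f=u(x)$ is pulled back from $\mathbb{P}^{1}$, its pole multiplicities at $P_\infty$ and $\bar{P}_\infty$ are equal, and the asymmetric allocation $\lceil g/2 \rceil$ versus $\lfloor g/2 \rfloor$ in $D_\infty$, combined with the effectivity bounds from both $D_0$ and $D_0'$, forces these pole orders to be zero. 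Hence $f$ is constant and $D_0=D_0'$.

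The main obstacle will be the uniqueness argument, specifically the bookkeeping at the two infinity points. It is straightforward to see that $v=0$ is forced, but one must then rule out non-constant $u(x) \in K(x)$ with carefully controlled pole orders at $P_\infty, \bar{P}_\infty$. This is where the specific allocation $D_\infty = \lceil g/2 \rceil P_\infty + \lfloor g/2 \rfloor \bar{P}_\infty$ is essential: a symmetric choice would leave room for functions like $x$ pulled back from $\mathbb{P}^{1}$ that connect distinct representatives. Verifying that this asymmetric choice together with the reduced-affine condition gives a unique representative, while correctly handling Weierstrass points in the reduction step, is the delicate technical point.
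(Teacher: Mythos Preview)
The paper does not prove this proposition at all: it is quoted from \cite{Sutherland} and used as a black box, so there is no ``paper's own proof'' to compare against. That said, let me comment on your outline on its own merits.

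Your existence argument is correct: Riemann--Roch produces an effective $D_0$ of degree $g$ in the right class, and trading each affine conjugate pair $P+\bar P$ for $P_\infty+\bar P_\infty$ via the function $x-x_0$ terminates in a representative with semi-reduced (hence reduced) affine part.

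The uniqueness argument, however, has two genuine gaps. First, the step ``$v\neq 0$ would force matched conjugate pairs of poles in the affine part'' is not correct as stated. If $u$ and $v$ have poles of the \emph{same} order at an affine $x_0$, the leading terms of $u$ and $vy$ can cancel at exactly one of the two points over $x_0$, so $f=u+vy$ can have a pole at $(x_0,y_0)$ and none at $(x_0,-y_0)$; this is perfectly compatible with $D_0'$ having semi-reduced affine part. So $v=0$ is not forced by the pole condition alone. Second, the asymmetry of $D_\infty$ plays no role whatsoever in uniqueness: the equation $[D_0-D_\infty]=[D_0'-D_\infty]$ is simply $D_0\sim D_0'$, and $D_\infty$ cancels. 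Whatever forces $D_0=D_0'$ must come from the two conditions ``effective of degree $g$'' and ``semi-reduced affine part'' alone.

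A clean route to uniqueness is via the structure of special linear systems on a hyperelliptic curve. If $D_0\neq D_0'$ then $\dim|D_0|\geq 1$, so $D_0$ is special. On a hyperelliptic curve every complete special linear system has the form $|D_0|=r\,\mathfrak g^1_2+F$ with $r\geq 1$ and $F$ the fixed part; concretely, every member of $|D_0|$ is $\pi^*(G)+F$ for some effective $G$ of degree $r$ on $\mathbb P^1$. If $G$ had any affine point $x_0$, then $\pi^*(G)\geq P+\bar P$ for the points over $x_0$, contradicting the semi-reduced affine part of $D_0$. Hence $G=r\cdot\infty$ and $D_0=r(P_\infty+\bar P_\infty)+F$; the same reasoning applied to $D_0'$ gives $D_0'=r(P_\infty+\bar P_\infty)+F=D_0$. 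This is where the hyperelliptic structure is really used, not the shape of $D_\infty$.
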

    
\noindent Based on Proposition \ref{1-1 corr betwen SRAD and MR} and \ref{unique rep of D}, we can easily deduce the following fact: every $[D] \in Pic^{0}(X)$ can be uniquely represented by a triple $(u,v,n)$ where $(u,v)$ satisfy the Mumford criteria and $div[u,v]$ is reduced. The triple $(u,v,n)$ will then correspond to the divisor

\begin{equation}\label{def of div_uvn}
    div[u,v,n]:=div[u,v]+nP_{\infty}+(g-deg(u)-n)\bar{P_{\infty}}-D_{\infty}.
\end{equation}

\begin{definition}
    \eqref{def of div_uvn} is called the Mumford representation for $[D] \in Pic^{0}(X)$. In particular, the trivial class $[0]$ has Mumford representation given by $div[1,0,\lceil \frac{g}{2} \rceil]$.
\end{definition} 

\noindent Now, we get a more compact description of elements in $Pic^{0}(X)$ using the Mumford representation. We are eventually interested in adding two elements in the Jacobian, i.e. given $[D_1]=div[u_1,v_1,n_1], ~[D_2]=div[u_2,v_2,n_2]$, how to get $(u,v,n)$ such that $div[u,v,n]=[D_1]+[D_2]$? We introduce Cantor's algorithm to answer this question. 

We first introduce an intermediate notation that will correspond to divisors whose affine part is semi-reduced instead of reduced. Given a semi-reduced affine divisor $div[u,v]$ with $deg(u) \leq 2g$ and an integer $n$ with $0 \leq n \leq 2g-deg(u)$, we define

\begin{equation}\label{def of div_uvn_star}
    div[u,v,n]^{*}:=div[u,v]+nP_{\infty}+(2g-deg(u)-n)\bar{P_{\infty}}-2D_{\infty}
\end{equation}

\noindent With this notation, the Cantor's algorithm can be summarized into the following 3 algorithms (We refer the reader to \cite{Sutherland} and \cite{Cantor} for details).

\bigskip
\noindent \textbf{Algorithm} PRECOMPUTE

\noindent Given $f(x) = x^{2g+2}+f_{2g+1}x^{2g+1}+\cdots +f_1x+f_0$, compute the unique monic $V(x)$ for which $deg(f-V^2) \leq g$. 

\noindent 1. Set $V_{g+1}:=1$

\noindent 2. For $i=g,g-1,...0$, compute $c:=f_{g+1+i} - \sum_{j=i+1}^{g+1} V_jV_{g+1+i-j}$ and set $V_i := c/2$. 

\noindent 3. Output $V(x):=x^{g+1}+V_gx^g+\cdots V_1x+V_0$. 

\bigskip
\noindent \textbf{Algorithm} COMPOSE

\noindent Given $div[u_1,v_1,n_1]$ and $div[u_2,v_2,n_2]$, compute $div[u_3,v_3,n_3]^{*}$ such that:

\begin{center}
$div[u_1,v_1,n_1]+div[u_2,v_2,n_2] ~\thicksim~ div[u_3,v_3,n_3]^{*}$

\end{center}

\noindent 1. Use the Euclidean algorithm to compute monic $w:=gcd(u_1,u_2,v_1+v_2) \in K[x]$ and $c_1,c_2,c_3 \in K[x]$ such that $w=c_1u_1 +c_2u_2+c_3(v_1+v_2)$

\noindent 2. Let $u_3:= u_1u_2/w^2$ and let $v_3:=(c_1u_1v_2+c_2u_2v_1+c_3(v_1v_2+f))/w ~~mod~ u_3$, where $f ~mod~ g$ means the unique polynomial $h$ such that $deg(h)<deg(g)$ and $g|f-h$. 

\noindent 3. Output $div[u_3,v_3,n_1+n_2+deg(w)]^{*}$. 

\bigskip
\noindent \textbf{Algorithm} ADJUST

\noindent Given $div[u_1,v_1,n_1]^{*}$ with $deg(u_1) \leq g+1$, compute $div[u_2,v_2,n_2]$ such that:

\begin{center}
$div[u_1,v_1,n_1]^{*} ~\thicksim~ div[u_2,v_2,n_2]$

\end{center}

\noindent 1. If $n_1 \geq \lceil g/2 \rceil$ and $n_1 \leq \lceil 3g/2 \rceil - deg(u_1)$, then output $div[u_1,v_1,n_1- \lceil g/2 \rceil]$ and terminate.

\noindent 2. If $n_1 < \lceil g/2 \rceil$, let $\hat{v_1}:=v_1 - V+(V ~mod~ u_1)$, let $u_2$ be $(f-\hat{v_1}^2)/u_1$ made monic, let $v_2:=-\hat{v_1} ~mod~ u_2$, and let $n_2:=n_1+g+1-deg(u_2)$. 

\noindent 3. If $n_1 \geq \lceil g/2 \rceil$, let $\hat{v_1}:=v_1 + V-(V ~mod~ u_1)$, let $u_2$ be $(f-\hat{v_1}^2)/u_1$ made monic, let $v_2:=-\hat{v_1} ~mod~ u_2$, and let $n_2:=n_1+deg(u_1)-(g+1)$. 

\noindent 4. Output ADJUST($div[u_2,v_2,n_2]^{*}$). 

\bigskip

We give a quick explanation of the algorithms. PRECOMPUTE is an auxiliary step whose output will be used in the 3rd step ADJUST. The 2nd step COMPOSE is the key step that adds the Mumford representation, but the output $div[u,v]$ may not be reduced (the Mumford representation for elements in the Jacobian requires $div[u,v]$ to be reduced), ie: $deg(u)$ may be greater than $g$. If that's the case, then we need the 3rd step ADJUST to reduce the degree of $u$ and get the correct $(u,v,n)$ such that $div[u,v]$ is reduced. Notice that, strictly speaking, ADJUST only deals with the situation where $deg(u) \leq g+1$. We need another algorithm to deal with cases when $deg(u)>g+1$. But this does not happen in the examples we will consider(as we shall see later). Therefore, ADJUST will be enough for our purpose. 

With all these concepts, we go back to our example. In our case, $g(\tilde{C})=3,D_{\infty}=2P_{\infty}-\bar{P_{\infty}}$ and $[0]=div[1,0,2]$. We want to decide if $D=\tilde{P_1}+\tilde{P_2}+\tilde{P_3}+\tilde{P_4}-(\tilde{Q_1}+\tilde{Q_2}+\tilde{Q_3}+\tilde{Q_4})$ is principal, i.e. if $[D]$ is trivial. Consider 
\begin{equation*}
    \begin{aligned}
        & D_1=\tilde{P_1}+\tilde{P_2}-(\tilde{Q_1}+\tilde{Q_2}) \\
        & D_2=\tilde{P_3}+\tilde{P_4}-(\tilde{Q_3}+\tilde{Q_4}).
    \end{aligned}
\end{equation*}
\noindent Then $D=D_1+D_2$ and $[D]=[D_1]+[D_2]$. For $D_1$, we know $\tilde{Q_1}, \tilde{Q_2}$ lie over $\sqrt{5}i$. Consider $h \in K(\tilde{C})$ given by $(x,y) \mapsto x-\sqrt{5}i$. Then $div(h)=\tilde{Q_1}+\tilde{Q_2}-P_{\infty}-\bar{P_{\infty}}$. So 
\begin{equation*}
    D_1 \sim \tilde{P_1}+\tilde{P_2}-P_{\infty}-\bar{P_{\infty}}=\tilde{P_1}+\tilde{P_2}+P_{\infty}-2P_{\infty}-\bar{P_{\infty}}=\tilde{P_1}+\tilde{P_2}+P_{\infty}-D_{\infty}=div[u,v,1]
\end{equation*}
\noindent where $u=(x-x_1)(x-x_2)$ such that $x_1,x_2$ are the x-coordinates for $\tilde{P_1},\tilde{P_2}$ respectively, i.e. $x_1=\frac{2\sqrt{5}}{\sqrt{5} + 1},x_2=\frac{2\sqrt{5}}{\sqrt{5} - 1}$. So $u=x^2-5x+5$ and $v=0$. Then $D_1 \sim div[x^2-5x+5,0,1]$. Similarly, $D_2 \sim div[x^2+5x+5,0,1]$. 

Now, we have the Mumford representations for $D_1$ and $D_2$, and we want to find the Mumford representation for $D=D_1+D_2$. To find the sum of two Mumford representations, we apply the Cantor's algorithm. First, we compute the auxiliary polynomial $V(x)$, which is given by $V(x)=x^4-52.5x^2-678.125$. It can be easily checked that this is indeed a monic polynomial with $deg(f-V^2) \leq g$, where $f$ is the defining polynomial for the hyperelliptic curve. Then we use the COMPOSE algorithm to compute $(u_3,v_3,n_3)$ such that
\begin{equation*}
    div[u_3,v_3,n_3]^{*} \sim div[x^2-5x+5,0,1]+div[x^2+5x+5,0,1].
\end{equation*}
\noindent We now follow the steps outlined in the COMPOSE algorithm. Clearly $u_1, u_2$ are relatively prime and hence $w=1, c_3=0$. Then $u_3=u_1u_2=x^4-15x^2+25, v_3=0$ and $n_3=2$. In our case, $g=3$ and hence $deg(x^4-15x^2+25) \leq g+1$. So we follow the ADJUST algorithm to find $(u_2,v_2,n_2)$ such that
\begin{equation*}
    div[u_2,v_2,n_2] \sim div[u_1,v_1,n_1]^{*} = div[x^4-15x^2+25,0,2]^{*}.
\end{equation*}
\noindent In our case, $n_1=2 \geq \lceil g/2 \rceil$, but $n_1> \lceil 3g/2\rceil -deg(u_1)$. So we go to step 3 of the algorithm. $V ~mod~ u_1 =V-u_1$, hence $\hat{v_{1}}=u_1=x^4-15x^2+25$. Then $u_2=x^2$, $v_2=-25$ and $n_2=2$. Now, $n_2 \geq \lceil g/2 \rceil$ and $n_2 \leq \lceil 3g/2 \rceil -deg(u_2)$. So we go to step 1 of the ADJUST algorithm and output $div[x^2,-25,0]$ as our final result. 

Therefore, we have $D \sim div[x^2,-25,0]$. Clearly, this is not the Mumford representation for $[0]$, so $D$ is not principal, and hence $\alpha=x^2-4$ is not a square in $k(\tilde{C})$. So once again, $A_{k(S_3)}$ doesn't extend over $S_3$.

\bibliographystyle{spmpsci}
\bibliography{Ref.bib}

\nocite{HT1985, CGLS1987, RR2023, FH1988, HS2007}

\end{document}